\newtheorem{lemma}{Lemma}[section]
\newtheorem{cor}[lemma]{Corollary}
\newtheorem{thm}[lemma]{Theorem}
\newtheorem{thm?}[lemma]{Theorem?}
\newtheorem{ques}{Question}
\newtheorem{conj}[lemma]{Conjecture}
\theoremstyle{definition}
\newtheorem{remark}[lemma]{Remark}
\providecommand{\bysame}{\leavevmode\hbox to3em{\hrulefill}\thinspace}
\begin{document}
\title{The number of atoms in an atomic domain}

\author{Pete L. Clark}
\author{Saurabh Gosavi}
\author{Paul Pollack}

\email{pete@math.uga.edu}
\email{saurabh.gosavi@uga.edu}
\email{pollack@uga.edu}

\address{Department of Mathematics \\ Boyd Graduate Studies Research Center \\ University
of Georgia \\ Athens, GA 30602\\ USA}


%

\newcommand{\etalchar}[1]{$^{#1}$}
\newcommand{\F}{\mathbb{F}}
\newcommand{\et}{\textrm{\'et}}
\newcommand{\ra}{\ensuremath{\rightarrow}}
\newcommand{\FF}{\F}
\newcommand{\Z}{\mathbb{Z}}
\newcommand{\N}{\mathcal{N}}
\newcommand{\ch}{}
\newcommand{\R}{\mathbb{R}}
\newcommand{\PP}{\mathbb{P}}
\newcommand{\pp}{\mathfrak{m}}
\newcommand{\C}{\mathbb{C}}
\newcommand{\Q}{\mathbb{Q}}
\newcommand{\tpqr}{\widetilde{\triangle(p,q,r)}}
\newcommand{\ab}{\operatorname{ab}}
\newcommand{\Aut}{\operatorname{Aut}}
\newcommand{\gk}{\mathfrak{g}_K}
\newcommand{\gq}{\mathfrak{g}_{\Q}}
\newcommand{\OQ}{\overline{\Q}}
\newcommand{\Out}{\operatorname{Out}}
\newcommand{\End}{\operatorname{End}}
\newcommand{\Gon}{\operatorname{Gon}}
\newcommand{\Gal}{\operatorname{Gal}}
\newcommand{\CT}{(\mathcal{C},\mathcal{T})}
\newcommand{\ttop}{\operatorname{top}}
\newcommand{\lcm}{\operatorname{lcm}}
\newcommand{\Div}{\operatorname{Div}}
\newcommand{\OO}{\mathcal{O}}
\newcommand{\rank}{\operatorname{rank}}
\newcommand{\tors}{\operatorname{tors}}
\newcommand{\IM}{\operatorname{IM}}
\newcommand{\CM}{\operatorname{CM}}
\newcommand{\Frac}{\operatorname{Frac}}
\newcommand{\Pic}{\operatorname{Pic}}
\newcommand{\coker}{\operatorname{coker}}
\newcommand{\Cl}{\operatorname{Cl}}
\newcommand{\loc}{\operatorname{loc}}
\newcommand{\GL}{\operatorname{GL}}
\newcommand{\PSL}{\operatorname{PSL}}
\newcommand{\Frob}{\operatorname{Frob}}
\newcommand{\Hom}{\operatorname{Hom}}
\newcommand{\Coker}{\operatorname{\coker}}
\newcommand{\Ker}{\ker}
\renewcommand{\gg}{\mathfrak{g}}
\newcommand{\sep}{\operatorname{sep}}
\newcommand{\new}{\operatorname{new}}
\newcommand{\Ok}{\mathcal{O}_K}
\newcommand{\ord}{\operatorname{ord}}
\newcommand{\MM}{\mathcal{M}}
\newcommand{\mm}{\mathfrak{m}}
\newcommand{\Ohell}{\OO_{p^{\infty}}}
\newcommand{\ff}{\mathfrak{f}}
\renewcommand{\N}{\mathbb{N}}
\newcommand{\Gm}{\mathbb{G}_m}
\newcommand{\Spec}{\operatorname{Spec}}
\newcommand{\MaxSpec}{\operatorname{MaxSpec}}
\newcommand{\qq}{\mathfrak{q}}
\newcommand{\cof}{\operatorname{cf}}
\newcommand{\lub}{\operatorname{lub}}
\newcommand{\glb}{\operatorname{glb}}
\newcommand{\redbox}{{\color{red}{\blacksquare}}}
\newcommand{\CKn}{\text{CK}(n)}
\newcommand{\CKnm}{\text{CK}(n,m)}
\newcommand{\CK}{{\rm CK}}

\setlength{\parskip}{6pt}
\setlength{\parindent}{0pt}
\newcommand{\forceindent}{\leavevmode{\parindent=15pt\indent}}
\setlength{\topsep}{0.35cm plus 0.2cm minus 0.2cm}

\makeatletter
\def\thm@space@setup{%
  \thm@preskip=0.25cm plus 0.3cm minus 0.16cm
  \thm@postskip=0.01cm plus 0cm minus 0.16cm
}
\makeatother

\begin{abstract}
We study the number of atoms and maximal ideals in an atomic domain with finitely many atoms and no prime elements.  We show in particular that for all $m,n \in \Z^+$ with $n \geq 3$ and $4 \leq m \leq \frac{n}{3}$ there is an atomic domain with precisely $n$ atoms, precisely $m$ maximal ideals and no prime elements.  The proofs use both commutative algebra and additive number theory.
\end{abstract}

\maketitle


\section{Introduction}
\subsection{Terminology}
Let $B \subset\Z^+$ be infinite, and let $A \subset B$.  We say \textbf{the relative density of $\bm A$ in $\bm B$ is} \boldsymbol{$\delta$} if
\[ \lim_{n \ra \infty} \frac{ \# (A \cap [1,n])}{\# (B \cap [1,n])} = \delta. \]
We say \textbf{the density of $\bm A$ is} \boldsymbol{$\delta$} if the relative density of $A$ in $\Z^+$ is $\delta$.
Let $R$ be a domain (a commutative ring without zero-divisors).  Let $R^{\bullet} = R \setminus \{0\}$, $R^{\times}$ be the unit group and $R^{\circ} = R^{\bullet} \setminus R^{\times}$.  An element $p \in R^{\circ}$ is \textbf{prime} if the ideal $(p)$ is prime; $p \in R^{\circ}$ is an \textbf{atom} if for all $x,y \in R$, $p = xy \implies x \in R^{\times}$ or $y \in R^{\times}$.  A domain $R$ is \textbf{atomic} if every $x \in R^{\circ}$ is a finite product of atoms and \textbf{factorial} if every $x \in R^{\circ}$ is a finite product of primes.  A domain is factorial iff it is atomic and all atoms are prime \cite[Thm. 15.8]{Clark-CA}.  A domain is \textbf{primefree} if it has no prime elements.  For a domain $R$ and a cardinal $\kappa$, ``$R$ has $\kappa$ atoms'' means there is a set $\mathcal{P}$ of atoms of $R$ of cardinality $\kappa$ such that
every atom of $R$ is associate to a unique $p \in \mathcal{P}$.  ``$R$ has $\kappa$ maximal ideals'' means the set $\MaxSpec R$ of maximal ideals of $R$ has cardinality $\kappa$.
\\ \forceindent
A \textbf{Cohen-Kaplansky domain} is an atomic domain with $\kappa < \aleph_0$ atoms.  Let $m,n \in \Z^+$ and let $q$ be a prime power. A \textbf{CK$\bm{(n)}$-domain} is an atomic domain with $n$ atoms; a \textbf{CK$\bm{(n;q)}$}-domain is a CK$(n)$-domain which is also an $\F_q$-algebra; and a \textbf{CK$\bm{(n;0)}$-domain} is a $\CKn$-domain of characteristic $0$.   A
\textbf{CK$\bm{(n,m)}$-domain} is a CK$(n)$-domain with exactly $m$ maximal ideals; a \textbf{CK$\bm{(n,m;q)}$-domain} is a $\CKnm$-domain which is also an $\F_q$-algebra; and a \textbf{CK$\bm{(n,m;0)}$}-domain is a $\CKnm$-domain of characteristic $0$.

\subsection{Main Results}
For any cardinal $\kappa$ there is a domain with $\kappa$ atoms: if $\kappa$ is finite we may take a localization of $\Z$, and if $\kappa$ is infinite we may take $k[t]$ for a field $k$ of cardinality $\kappa$.  These examples are not so interesting: they are factorial domains, so every atom is prime.  Our point of departure in this note is the following:
\begin{ques}
\label{QUES1}
For which $\kappa$ is there a \emph{primefree} atomic domain with $\kappa$ atoms?
\end{ques}
In \cite{Cohen-Kaplansky46} Cohen-Kaplansky showed that a primefree atomic domain which is not a field has at least three atoms and that there are \emph{local} primefree atomic domains with $n$ atoms for $3 \leq n \leq 10$ \cite{Cohen-Kaplansky46}.  The case $\kappa \geq \aleph_0$ has already been handled.

\begin{thm}
\label{THM1.1}
Let $\kappa$ be an infinite cardinal.  Then:
\begin{enumerate}[noitemsep,topsep=0pt,label={\alph*}{\rm)},font=\rm]
\item \cite[Thm. 5.2]{Clark15} There is a primefree local atomic domain with $\kappa$ atoms.
\item \cite[Thm. 5.5]{Clark15} There is a primefree Dedekind domain with $\kappa$ atoms.
\end{enumerate}
\end{thm}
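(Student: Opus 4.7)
The plan for (a) is a ``$D+M$''-style pullback. Fix a field $L$ with $|L| = \kappa$ and a proper subfield $k \subsetneq L$ with $|L^\times/k^\times| = \kappa$ (for instance, $L = \F_p(x_i : i \in I)$ with $|I| = \kappa$ and $k = \F_p$). Set $R = k + tL[[t]] \subset L[[t]]$. Then $R$ is local with maximal ideal $\mathfrak{m} = tL[[t]]$ and unit group $R^\times = k^\times + \mathfrak{m}$. The key claim is that the non-associate atoms of $R$ are exactly the classes of $\{at : a \in L^\times\}$ under $at \sim bt \iff a/b \in k^\times$, yielding $|L^\times/k^\times| = \kappa$ atoms.

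Both atomicity and this classification would follow from a lowest-degree-term peeling argument: for $f = a_n t^n + a_{n+1} t^{n+1} + \cdots \in \mathfrak{m}$ with $a_n \in L^\times$, write $f = (a_n t)\,g$ for a suitable $g \in R$ of $t$-adic valuation $n-1$, and iterate (when $n = 1$ the factor $g$ is automatically a unit). For primefreeness, I would show no atom $at$ is prime by exhibiting a zero divisor in $R/(at)R$: pick $c \in L$ in a $k^\times$-coset distinct from that of $a$; then $(ct)^2 = (at) \cdot (c^2/a)\,t \in (at)R$ since $(c^2/a)t \in \mathfrak{m} \subset R$, while $ct \notin (at)R$ because that would force $c/a \in k$.

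For (b), the plan is to construct a Dedekind domain by inverting principal primes. Begin with a Dedekind domain $A$ with nontrivial class group in which each ideal class contains $\kappa$ prime ideals: when $\kappa = \aleph_0$, take $A = \Ok$ for an imaginary quadratic field with nontrivial class group (Chebotarev supplies infinitely many primes in each class); for $\kappa > \aleph_0$, use the coordinate ring of an affine curve over a field of size $\kappa$ with nontrivial Picard group, or invoke Claborn's theorem. Let $S \subset A$ be the multiplicative set generated by the principal primes, and set $R = S^{-1}A$. Then $R$ remains Dedekind and its nonzero primes are exactly the non-principal primes of $A$, so $R$ is primefree. To count atoms, use the correspondence between atoms of $R$ (modulo associates) and minimal zero-sum sequences over $\Cl(R)$ supported on maximal ideals: fix a prime $\mathfrak{p}_0$ in a nontrivial class $c$, and for each of the $\kappa$ primes $\mathfrak{q}$ in class $-c$, the product $\mathfrak{p}_0\mathfrak{q}$ is principal with atom generator (since $\mathfrak{p}_0$ and $\mathfrak{q}$ are non-principal); distinct $\mathfrak{q}$ give non-associate atoms. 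This produces $\kappa$ atoms, and the upper bound follows from $|R| \leq \kappa$.

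The main obstacle in (a) is ruling out higher-order ``surprise'' atoms---the peeling argument closes this, but one must carefully distinguish the cases where the leading coefficient $a_n$ lies in $L \setminus k$ versus $k^\times$. The main obstacle in (b) is producing a base Dedekind domain with $\kappa$ primes suitably distributed across nontrivial classes when $\kappa$ is uncountable; here one may need to invoke or adapt Claborn's construction, after which the localization and atom count are routine.
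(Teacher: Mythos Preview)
The paper does not supply its own proof of Theorem~\ref{THM1.1}; both parts are simply cited from \cite{Clark15}. So there is no ``paper's proof'' to compare against, and your proposal must be judged on its own.

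Your argument for (a) is correct. The ring $R=k+tL[[t]]$ is local with maximal ideal $tL[[t]]$, the peeling argument shows atomicity and that every atom is associate to some $at$, the associate classes of $\{at\}$ are indexed by $L^\times/k^\times$, and your nilpotent-in-the-quotient trick cleanly shows primefreeness. One small wording issue: you want $c/a\notin k$, not merely $c/a\notin k^\times$; since you also take $c\ne 0$ this is equivalent, but it is worth saying.

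For (b) the countable case is fine. In the uncountable case your two suggested sources for $A$ do not deliver what your argument actually uses. The affine curve construction over an algebraically closed field $F$ of size $\kappa$ (say $A=F[E\setminus\{O\}]$ for an elliptic curve $E$) gives exactly \emph{one} prime in each nontrivial ideal class, not $\kappa$ primes in a fixed class $-c$; and Claborn's theorem as usually stated controls only the isomorphism type of $\Cl(A)$, not the distribution of primes among classes. The fix is easy: drop the requirement of $\kappa$ primes in a single class and instead use $\kappa$ nonprincipal primes spread over $\kappa$ classes. With $A=F[E\setminus\{O\}]$ one already has no principal primes at all (the trivial class contains no closed point), so $A$ itself is a primefree Dedekind domain; for each $P\in E(F)$ with $P\ne -P$ the product $\mathfrak{p}_P\mathfrak{p}_{-P}$ is principal with an atom generator, and distinct unordered pairs $\{P,-P\}$ give nonassociate atoms. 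That yields $\kappa$ atoms, and $|A|=\kappa$ gives the upper bound. So your strategy is sound, but the atom-counting step should be rephrased to match what the elliptic-curve example actually provides.
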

\noindent
We are left with the (more interesting) case of $\kappa < \aleph_0$ atoms.  Question \ref{QUES1} becomes: for which $n \geq 3$ is there a primefree $\CKn$-domain?  In this form it was raised by Coykendall-Spicer \cite{Coykendall-Spicer12}, who showed there are primefree $\CKn$-domains for all $n \geq 3$ conditionally on the conjecture that every even $n \geq 8$ is a sum of two \emph{distinct} primes.  They derived this as a consequence of the following result.

\begin{thm}[{Coykendall-Spicer \cite{Coykendall-Spicer12}}]
\label{MAINCSCOR}
\label{THM1.2}
For any primes $p_1 < \ldots < p_m$ there is a primefree \CK$(\sum_{j=1}^m (p_j+1),m;0)$-domain.
\end{thm}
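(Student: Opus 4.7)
The plan is to construct $R$ as a residue-field pullback inside a quadratic number field in which every $p_j$ is inert. The motivating numerical observation is
\[ p_j + 1 \;=\; \bigl|\F_{p_j^2}^\times / \F_{p_j}^\times\bigr|, \]
which strongly suggests that each maximal ideal of $R$ should sit beneath a prime of its integral closure with residue degree $2$ and residue field $\F_{p_j^2}$; a Cohen--Kaplansky style local analysis will then produce exactly $p_j+1$ atoms at that maximal ideal.

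By CRT and quadratic reciprocity, choose a squarefree $d \in \Z$ such that each $p_j$ is inert in $K := \Q(\sqrt{d})$ (take $d$ a non-residue modulo each odd $p_j$, and $d \equiv 5 \pmod{8}$ if $2 \in \{p_1,\dots,p_m\}$); let $\mathfrak{P}_j \subset \Ok$ denote the unique prime above $p_j$, with residue field $\F_{p_j^2}$. Let $T$ be the localization of $\Ok$ in which every rational prime distinct from $p_1,\dots,p_m$ is inverted. Then $T$ is a semilocal Dedekind domain, hence a PID, with maximal ideals $\mathfrak{P}_1 T, \dots, \mathfrak{P}_m T$. Define the pullback
\[ R \;:=\; \bigl\{\, x \in T \;:\; x \bmod \mathfrak{P}_j T \in \F_{p_j} \text{ for every } j = 1, \dots, m \,\bigr\}. \]
Since $\Z \subseteq R$, we have $\mathrm{char}(R) = 0$. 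Since $\mathfrak{P}_1 T \cdots \mathfrak{P}_m T \subseteq R$, the ring $T$ is a finite $R$-module, so by Eakin--Nagata $R$ is Noetherian; it is one-dimensional semilocal with integral closure $T$, and its maximal ideals are $\mm_j := \mathfrak{P}_j \cap R$, each with residue field $\F_{p_j}$.

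The heart of the proof is a local count combined with a gluing argument. At each maximal ideal,
\[ R_{\mm_j} \;=\; \bigl\{\, x \in T_{\mathfrak{P}_j} \;:\; x \bmod \mathfrak{P}_j \in \F_{p_j} \,\bigr\}, \]
whose integral closure is the DVR $T_{\mathfrak{P}_j}$ with uniformizer $\pi_j$ and residue field $\F_{p_j^2}$. A short valuation argument shows that an element of $R_{\mm_j}^{\circ}$ is an atom iff its $\pi_j$-adic valuation is $1$, and that $\pi_j u$ and $\pi_j u'$ are $R_{\mm_j}$-associate iff $u^{-1} u' \bmod \pi_j \in \F_{p_j}^\times$; so $R_{\mm_j}$ has exactly $[\F_{p_j^2}^\times : \F_{p_j}^\times] = p_j + 1$ atoms up to associates. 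To pass from local to global, use CRT in the semilocal ring $R$ (applied to the pairwise comaximal ideals $\mm_j^{N}, \mm_k$) to show that every local atom at $\mm_j$ lifts to an element $a \in R$ with $\ord_{\mathfrak P_j}(a) = 1$ and $a$ a unit at every other $\mm_k$, and that such an $a$ is necessarily an atom of $R$. Conversely, if an $R$-atom $a$ lay in $\mm_j \cap \mm_k$ with $j \neq k$, the same CRT trick produces $b \in R$ with $b R_{\mm_j} = a R_{\mm_j}$ and $b$ a unit at the remaining maximal ideals; then $c := a/b \in \bigcap_{\mathfrak{m}} R_{\mathfrak{m}} = R$, and $a = bc$ is a nontrivial factorization. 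So the atoms of $R$ are in bijection with $\bigsqcup_{j=1}^m \{\text{atoms of } R_{\mm_j}\}$, giving the stated total of $\sum_{j=1}^m (p_j+1)$.

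For primefreeness: since $\dim R = 1$, a prime element would generate some $\mm_j$, but $\mm_j R_{\mm_j} = \pi_j T_{\mathfrak{P}_j}$ is not principal over $R_{\mm_j}$ -- for any candidate $\pi_j u \in R_{\mm_j}$ one checks that $\pi_j u \cdot R_{\mm_j}$ is either $\pi_j R_{\mm_j}$ (if $u$ has residue in $\F_{p_j}^\times$) or a proper intermediate $R_{\mm_j}$-submodule (if $u$ has residue in $\F_{p_j^2}^\times \setminus \F_{p_j}^\times$), never all of $\pi_j T_{\mathfrak{P}_j}$. Hence no $\mm_j$ is principal and $R$ has no prime elements. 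I expect the main obstacle to be the global-to-local atom correspondence in the previous paragraph: both directions require careful use of CRT together with the conductor containment $\mathfrak{P}_1 T \cdots \mathfrak{P}_m T \subseteq R$ to move between $R$ and its localizations without gaining or losing irreducibility.
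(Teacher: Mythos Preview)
Your construction is correct and is exactly the Coykendall--Spicer construction that the paper summarizes in \S4.1: a quadratic number field in which each $p_j$ is inert, the semilocal PID $T$ obtained by inverting all other primes, and the conductor-square pullback $R = \overline{r}^{-1}(\prod_j \F_{p_j})$ inside $T$. The paper does not give a separate proof of this theorem; it cites \cite{Coykendall-Spicer12} and then recovers it as the case $q_j = p_j$, $d_j = 2$ of the Globalization Theorem~\ref{SAURABHTHM}b), whose proof assembles the same local pieces via completions and the Anderson--Mott composite formalism rather than by a direct global pullback.

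Two places where the paper's \S2 machinery would shorten your argument: your local atom count is precisely Theorem~\ref{02.8}b),(c) (the equivalence $R_{\mm_j} = \overline{r}^{-1}(\F_{p_j}) \Leftrightarrow$ exactly $\frac{q^d-1}{q-1}$ atoms, here with $q = p_j$, $d = 2$), and your CRT-based local-to-global bijection for atoms is Theorem~\ref{02.2}b). Your primefreeness argument is fine in substance --- since $R_{\mm_j} \subsetneq T_{\mathfrak{P}_j}$ and multiplication by $\pi_j$ is injective, no principal ideal $aR_{\mm_j}$ can equal $\mm_j R_{\mm_j} = \pi_j T_{\mathfrak{P}_j}$ --- though the phrase ``proper intermediate submodule'' is imprecise (when $u$ has residue outside $\F_{p_j}$, $\pi_j u\, R_{\mm_j}$ need not contain $\pi_j R_{\mm_j}$; it is simply another proper $R_{\mm_j}$-submodule of $\pi_j T_{\mathfrak{P}_j}$). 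The paper handles this step via Theorem~\ref{02.2}c): $R_{\mm_j}$ is not a DVR, so $R$ has no prime element.
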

\noindent
In the 1930's Chudakov, Estermann and van der Corput showed that the subset of even positive integers which are a sum of two primes has relative density $1$ \cite{Chudakov37,Chudakov38}, \cite{Estermann38}, \cite{vanderCorput37}.  From this
and Theorem \ref{MAINCSCOR} it follows that the set of $n \in \Z^+$ for which there is a primefree \CK$(n,2;0)$-domain or a primefree \CK$(n,3;0)$-domain has density $1$.
 We will give a stronger result with a similar proof, so we omit the details.  By making a different -- more elementary -- analytic argument, we may deduce from Theorem \ref{MAINCSCOR} an answer to Question \ref{QUES1}.

\begin{thm}
\label{MAINTHM}
\label{THM1.3}
For all $n \geq 3$ there is a primefree \CK$(n)$-domain.
\end{thm}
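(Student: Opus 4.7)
My plan is to combine Theorem~\ref{THM1.2} with Cohen-Kaplansky's construction for small $n$ and an elementary induction for large $n$. Call $n$ \emph{representable} if it can be written as $\sum_{j=1}^m (p_j + 1)$ with $p_1 < \dots < p_m$ distinct primes; by Theorem~\ref{THM1.2}, every representable $n$ admits a primefree $\CK(n)$-domain.

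The integer $n = 5$ is the unique integer $\geq 3$ that is \emph{not} representable: a one-prime representation would require $n - 1 = 4$ to be prime, while any representation using $\geq 2$ primes has value $\geq (2+1)+(3+1) = 7 > 5$. Together with $n = 3, 4, 6, 7, 8, 9, 10$, this case is handled by Cohen-Kaplansky's construction of primefree local atomic domains with $n$ atoms for $3 \leq n \leq 10$. So it remains to show every $n \geq 11$ is representable.

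I would prove this by strong induction on $n$, with explicit verification for $11 \leq n \leq N_0$ (for instance $N_0 = 30$, a finite check). For $n > N_0$, I would use Bertrand's postulate, sharpened to the explicit bound $\pi(n-1) - \pi(n/2) \geq 4$ valid for $n \geq 30$, to produce a prime $p$ with $n/2 < p \leq n - 1$ and $p \notin \{n-2,\, n-3,\, n-6\}$. Writing $r := n - p - 1$, we have $0 \leq r < n/2 < p$ and $r \notin \{1, 2, 5\}$. If $r = 0$ then $n = p + 1$; otherwise $r \in \{3, 4\} \cup \{k : k \geq 6\}$, and the strong inductive hypothesis (or a base case, when $r \leq 10$) provides $r = \sum_j (q_j + 1)$ with distinct primes $q_j \leq r < p$. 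Appending $p$ to this list yields the desired representation of $n$.

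The main obstacle is ensuring a ``good'' prime $p$ exists at every inductive step. For small $n$ this can fail: for instance, when $n = 13$ the only primes in $(n/2, n-1] = (6.5, 12]$ are $7$ and $11$, both of which are forbidden since $7 = n - 6$ and $11 = n - 2$. This is exactly why a nontrivial block of base cases must be verified before the induction takes over. Once $n \geq N_0$ the interval $(n/2, n-1]$ contains at least four primes, so pigeonhole lets us avoid the three forbidden values; both the threshold $N_0 = 30$ and the finite list of base cases reduce to routine computations with standard tables of $\pi(x)$.
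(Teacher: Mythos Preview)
Your proposal is correct and follows essentially the same approach as the paper: reduce to Theorem~\ref{THM1.2} plus Cohen--Kaplansky's small cases, then prove representability of $n$ as $\sum (p_j+1)$ via strong induction driven by a Bertrand-type bound. The only differences are cosmetic. The paper uses Nagura's theorem to locate a prime in $(\tfrac{n}{2}-1,\,n-7]$, which forces the remainder $r = n-(p+1)$ into $[6,\tfrac{n}{2})$ directly; you instead use the Ramanujan-prime style bound $\pi(n-1)-\pi(n/2)\ge 4$ together with pigeonhole to dodge the three bad residues $r\in\{1,2,5\}$. The paper also takes Cohen--Kaplansky only for $n\in[3,5]$ and proves representability from $n\ge 6$ onward, whereas you lean on Cohen--Kaplansky through $n=10$. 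Both executions work; the paper's is marginally cleaner since Nagura handles the ``avoid bad remainders'' issue in one stroke rather than via a counting-plus-exclusion argument.
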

\noindent
Here is the key idea of the proof: whereas Coykendall-Spicer apply Theorem \ref{MAINCSCOR} with $m \in \{2,3\}$, to get primefree CK$(n,m)$-domains, we may choose $m$ in terms of $n$.  In fact it suffices to prove the following result.

\begin{thm}
\label{PAULTHM}
\label{THM1.4}
For all $n \geq 6$ there are primes $p_1 < \ldots < p_m$ such that $n = \sum_{j=1}^m (p_j + 1)$.
\end{thm}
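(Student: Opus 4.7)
The plan is to prove the equivalent statement that every integer $n \geq 6$ is a sum of distinct elements of $T := \{p + 1 : p \text{ prime}\} = \{3, 4, 6, 8, 12, 14, 18, 20, \ldots\}$, by strong induction on $n$, with an effective form of Bertrand's postulate as the analytic engine. One preliminary observation drives the induction: the only positive integers less than $6$ that \emph{fail} to be sums of distinct elements of $T$ are $1$, $2$, and $5$, whereas $0$ (the empty sum), $3$, and $4$ are representable.

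For the base step, fix a threshold $N$ and verify by inspection that each $n \in [6, N]$ has such a representation; sample cases are $6 = 5 + 1$, $7 = 3 + 4$, $9 = 3 + 6$, $10 = 4 + 6$, $19 = 3 + 4 + 12$. The precise value of $N$ will be dictated by the inductive step.

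For the inductive step at $n > N$, I claim it suffices to find a prime $p$ with $n/2 < p \leq n - 1$ and $r := n - p - 1 \notin \{1, 2, 5\}$, equivalently $p \notin \{n - 2, n - 3, n - 6\}$. Given such a $p$: if $r = 0$, take $m = 1$ with $p_1 = p$; if $r \in \{3, 4\}$, take $m = 2$ with the second prime equal to $2$ or $3$ respectively (both less than $p$, since $p > n/2 \geq 3$); and if $r \geq 6$, then $r \leq n - 1$, so by strong induction $r = \sum_{i=1}^k (q_i + 1)$ for distinct primes $q_1 < \cdots < q_k$, each satisfying $q_i \leq r - 1 < n/2 < p$, so appending $p$ produces a representation of $n$ by distinct primes.

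Finding such a prime $p$ reduces to showing that the interval $(n/2, n - 1]$ contains at least four primes, so that the three forbidden values $n - 2, n - 3, n - 6$ can be avoided. This follows from Ramanujan's effective refinement of Bertrand's postulate ($\pi(x) - \pi(x/2) \geq 4$ for $x \geq 29$, say) or any comparable elementary Chebyshev-type estimate, once $n$ is past an explicit threshold. The main obstacle is calibration: one must pin down the exact threshold beyond which the inductive step applies, and then hand-verify the finitely many base cases below it. Neither task is deep, but both demand bookkeeping.
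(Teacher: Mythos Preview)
Your argument is correct and follows essentially the same strategy as the paper: strong induction on $n$, using a Bertrand-type estimate to peel off a prime $p$ with $p+1 > n/2$, so that the residual representation uses only smaller primes. The one noteworthy difference is in the bookkeeping. The paper restricts the search to the interval $(\tfrac{n}{2}-1,\,n-7]$ and invokes Nagura's theorem (a prime in $(x,\tfrac{6x}{5}]$ for $x \ge 25$) to guarantee a single prime there; the upper endpoint $n-7$ forces the remainder $r = n-(p+1) \ge 6$ automatically, so the exceptional values $r \in \{0,1,2,3,4,5\}$ never arise and no case split or avoidance of bad $p$ is needed. Your version instead allows $p$ up to $n-1$, treats $r \in \{0,3,4\}$ by hand, and then must exclude three bad values of $p$, which in turn requires four primes in $(\tfrac{n}{2},\,n-1]$ rather than one. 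Both routes are elementary and lead to comparable explicit thresholds; the paper's choice of interval simply trades a marginally stronger Bertrand input for a cleaner induction.
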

\noindent
This maneuver leads us to ask a refined version of Question \ref{QUES1} in the finite case.

\begin{ques}
\label{QUES2}
For which $m,n \in \Z^+$ is there a primefree $\CK(n,m)$-domain?
\end{ques}
\noindent
The main goal of this note is to address Question \ref{QUES2}.  We give a complete answer for $m =4$, an answer up to finitely many $n$ for $m =3$, and an answer up to density $0$ for $m =2$.  Moreover we conjecture a complete answer for all $m \geq 2$.  For $m = 1$ we (only) record an implication of an old result of Cohen-Kaplansky.  In more detail:

\begin{thm}
\label{LITTLEMAIN}
\label{THM1.5}
If there is a primefree $\CK(n,m)$-domain, then
$m \leq \frac{n}{3}$.
\end{thm}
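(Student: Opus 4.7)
The plan is to reduce the global bound to the classical local one of Cohen-Kaplansky, namely that a primefree local atomic domain which is not a field has at least three non-associate atoms. Let $R$ be a primefree $\CK(n,m)$-domain with maximal ideals $\mathfrak{m}_1,\dots,\mathfrak{m}_m$. Since $R$ is a CK-domain, it is one-dimensional, Noetherian, and semilocal. The first ingredient, which I would extract from the Cohen-Kaplansky/Anderson-Mott structure theory of CK-domains, is that each atom $\pi$ of $R$ is contained in a unique maximal ideal (equivalently, $(\pi)$ is primary). Granting this, the set of non-associate atoms of $R$ partitions into disjoint subsets $A(\mathfrak{m}_1),\dots,A(\mathfrak{m}_m)$ with $\sum_i |A(\mathfrak{m}_i)| = n$, so the theorem reduces to the per-ideal bound $|A(\mathfrak{m}_i)| \geq 3$.

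For the local bound, fix $i$ and pass to $R_{\mathfrak{m}_i}$, a one-dimensional local atomic domain. I first claim $R_{\mathfrak{m}_i}$ is primefree. Otherwise, being one-dimensional local with a prime element, $R_{\mathfrak{m}_i}$ would be a DVR, with unique non-associate atom $\pi/1$ for some $\pi \in A(\mathfrak{m}_i)$, and $\pi R_{\mathfrak{m}_i} = \mathfrak{m}_i R_{\mathfrak{m}_i}$ prime. The extension-contraction identity $(\pi R_{\mathfrak{m}_i}) \cap R = (\pi)$ holds because $(\pi)$ is $\mathfrak{m}_i$-primary in $R$; so for any $a,b \in R$ with $ab \in (\pi)$, the primality of $\pi/1$ in $R_{\mathfrak{m}_i}$ forces $a/1$ or $b/1$ into $\pi R_{\mathfrak{m}_i}$, hence $a$ or $b$ into $(\pi)$. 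This makes $\pi$ a prime of $R$, contradicting primefreeness. Thus $R_{\mathfrak{m}_i}$ is a primefree local atomic domain which is not a field (since $\mathfrak{m}_i R_{\mathfrak{m}_i} \neq 0$), so by Cohen-Kaplansky it has at least three non-associate atoms. A short factorization argument (write an atom of $R_{\mathfrak{m}_i}$ as $r/s$ with $s \notin \mathfrak{m}_i$, clear denominators, factor $r$ in $R$, and note that atoms of $R$ outside $\mathfrak{m}_i$ map to units) shows every atom of $R_{\mathfrak{m}_i}$ is associate in $R_{\mathfrak{m}_i}$ to $\pi/1$ for some $\pi \in A(\mathfrak{m}_i)$. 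Hence $|A(\mathfrak{m}_i)| \geq 3$, and summing over $i$ yields $n \geq 3m$.

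The principal obstacle is the structural input: that each atom of a primefree CK-domain lies in a unique maximal ideal, which is not a formal consequence of the one-dimensional Noetherian semilocal structure. I would either cite this from the Cohen-Kaplansky/Anderson-Mott classification (CK-domains arise as pullbacks involving a semilocal PID integral closure, so atoms correspond naturally to the maximal ideals of that PID, one per maximal ideal of $R$), or prove it by contradiction: if $\pi \in \mathfrak{m}_1 \cap \mathfrak{m}_2$, then lifting a nontrivial idempotent from the Artinian ring $R/(\pi)$ produces non-unit elements $e, 1-e \notin (\pi)$ with $e(1-e) \in (\pi)$, and an analysis of the resulting distinct atomic factorizations, combined with the primefreeness hypothesis and the specific finite-quotient structure of CK-domains, should force the existence of a prime element and yield the contradiction.
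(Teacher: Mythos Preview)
Your proposal is correct and follows essentially the same route as the paper: partition the atoms among the maximal ideals, show each localization is primefree, and invoke the Cohen--Kaplansky bound of at least three atoms locally. The paper packages exactly these ingredients as Theorem~\ref{02.2} (parts b) and c), cited from \cite{Cohen-Kaplansky46}) and Lemma~\ref{CKLEMMA}d), so your ``principal obstacle'' --- that each atom lies in a unique maximal ideal and that localization sets up a bijection on atoms --- is resolved by citation, just as you suggest as your first option; there is no need for the idempotent-lifting argument you sketch.
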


\begin{thm}
\label{MAINERTHM1}
\label{THM1.6}
Let $m$ and $n$ be positive integers.
\begin{enumerate}[noitemsep,topsep=0pt,label={\alph*}{\rm)},font=\rm]
\item If $n \geq 10$ is even and $m \in [3,\frac{n}{3}]$, there is a primefree \CK$(n,m;0)$-domain.
\item If $n \geq 13$ is odd and $m \in [4,\frac{n}{3}]$, there is a primefree \CK$(n,m;0)$-domain.
\item If $n$ is sufficiently large, there is a primefree \CK$(n,3;0)$-domain.
\end{enumerate}
\end{thm}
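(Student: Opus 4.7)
The plan is to reduce Theorem~\ref{THM1.6} to additive number theory using Theorem~\ref{MAINCSCOR}: Coykendall--Spicer's construction produces a primefree \CK$(n,m;0)$-domain whenever $n - m = p_1 + \cdots + p_m$ for distinct primes $p_1 < \cdots < p_m$, so each part becomes a question about writing $n - m$ as a sum of $m$ distinct primes. First I would carry out a parity analysis. The sum of $m$ distinct primes is $\equiv m \pmod 2$ if $2$ is excluded and $\equiv m+1 \pmod 2$ if $2$ is included, so matching against $n - m$ forces, in part~(a) ($n$ even), all $m$ primes to be odd, and in part~(b) ($n$ odd), inclusion of $2$ with the residual problem of $m-1$ distinct odd primes summing to $n - m - 2$. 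Thus all three parts reduce to a common shape: write $N$ of specified parity as a sum of $k$ distinct odd primes, where $(N,k) = (n-m, m)$ in (a), $(n-m-2, m-1)$ in (b), and $k \in \{2,3\}$ in (c).

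For every subcase with $k \geq 3$ I would apply a ``padding plus Vinogradov'' argument: fix the first $k-3$ odd primes $3, 5, \ldots$, subtract their sum $S$ from $N$, apply the effective three-primes theorem (Vinogradov--Helfgott) to write $N - S$ as a sum of three odd primes, and perturb (swapping a repeated prime for a neighbor) to make them strictly increasing and larger than the padded primes. Because $m \leq n/3$ the residual $N - S$ grows linearly in $n$ and eventually exceeds Helfgott's threshold, so this settles part~(a) for $m \geq 3$, part~(b) for $m \geq 4$, and the even-$n$ case of (c), in each case for $n$ sufficiently large.

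The main obstacle is the remaining case of (c), $n$ odd and $m = 3$: here $k = 2$, so Vinogradov is unavailable and the additive problem is binary Goldbach for $n - 5$. Since this is open, I would fall back on the density-$1$ theorem of Chudakov, Estermann and van der Corput, which handles all but a density-zero set of even numbers; together with the even-$n$ case this yields (c) for a set of $n$ of natural density~$1$. The small-$n$ boundary subcases of (a) and (b) where the additive argument fails---for instance $n = 10,\ m = 3$, where $n - m = 7$ is not a sum of three distinct odd primes---cannot come from Theorem~\ref{MAINCSCOR} directly, so for these I would prove a separate ``concatenation'' lemma combining primefree CK$(n_i, m_i; 0)$-domains into a primefree CK$(\sum n_i, \sum m_i; 0)$-domain (via a semi-local pullback), and then assemble the missing rings from the Cohen--Kaplansky local primefree CK$(n, 1; 0)$-domains with $3 \leq n \leq 10$.
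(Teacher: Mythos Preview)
There are two genuine gaps.

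\textbf{Parts (a) and (b).} Your claim that ``the residual $N-S$ grows linearly in $n$'' is false in the regime that matters. In part~(a) you take $k=m$ and pad with the first $m-3$ odd primes; their sum $S$ satisfies $S\sim \tfrac{1}{2}m^2\log m$, while $N=n-m\le 2m$ when $m$ is near $n/3$. So for $m$ close to $n/3$ you have $N-S<0$ and the padding scheme collapses. More bluntly: it is \emph{impossible} to write $n-m$ as a sum of $m$ distinct primes once $m$ is moderately large, since the minimum such sum already exceeds $n-m$. Thus Theorem~\ref{MAINCSCOR} alone cannot reach most of the range $m\in[3,n/3]$, and your ``perturb to make distinct and larger than the padded primes'' step is both unjustified and, in this range, unfixable. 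The paper avoids all of this by making the concatenation lemma (your ``semi-local pullback'', their Theorem~\ref{SAURABHTHM}) the \emph{primary} tool rather than a boundary patch: once repetition of local blocks is allowed, one simply uses $m-3$ (or $m-4$) copies of the $\CK(3,1;0)$-block together with Helfgott's three-prime theorem applied to the remaining odd residue, with no distinctness constraint. Your proposal has the roles inverted.

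\textbf{Part (c).} You prove only that the set of admissible $n$ has density $1$, whereas the theorem asserts a primefree $\CK(n,3;0)$-domain for \emph{all} sufficiently large $n$. The odd case genuinely cannot be reduced to binary Goldbach. The paper handles it by allowing a local block with $d=3$, i.e.\ one of size $p^2+p+1=\frac{p^3-1}{p-1}$, and then invoking Schwarz's extension of Vinogradov's theorem to write every sufficiently large odd $n$ as $(p_1^2+p_1+1)+(p_2+1)+(p_3+1)$; Theorem~\ref{SAURABHTHM}b) then gives the $\CK(n,3;0)$-domain. Without a non-linear block of this type your approach cannot close the gap between density $1$ and all sufficiently large $n$.
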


\begin{thm}\label{MAINERTHM2}\label{THM1.7}\mbox{ }
\begin{enumerate}[noitemsep,topsep=0pt,label={\alph*}{\rm)},font=\rm]
\item The set of $n \in \Z^+$ for which there is a primefree \CK$(n,2;0)$-domain has
    density $1$.
\item Conditionally on the Goldbach Conjecture -- Conjecture \ref{GOLDBACHCONJ} in $\S$3 -- for every even $n \geq 6$ there is a primefree \CK$(n,2;0)$-domain.\item Conditionally on Schinzel's generalization of the Goldbach Conjecture-- Conjecture \ref{SCHINZELCONJ} in $\S$3 -- for every sufficiently large odd integer $n$ there is a primefree \CK$(n,2;0)$-domain.
\end{enumerate}
\end{thm}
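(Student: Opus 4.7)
The unifying strategy is to apply Theorem \ref{MAINCSCOR} with $m = 2$: a primefree CK$(n, 2; 0)$-domain is produced whenever $n - 2 = p_1 + p_2$ for primes $p_1 < p_2$. Each of (a), (b), (c) thereby reduces to a representation statement from additive prime number theory concerning $n - 2$, and all of the commutative algebra is packaged into this single reduction.

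For (a), I would invoke the Chudakov--Estermann--van der Corput theorem, which asserts that the even integers expressible as a sum of two primes have density $1$ among the even integers. Applied to $N = n - 2$ with $n$ even, this produces a representation $n - 2 = p + q$ for a density-$1$ subset of even $n$. The stricter requirement $p < q$ excludes at most those $n$ with $n - 2 = 2p$ (prime $p$) as their only Goldbach representation; since the primes have density zero, these form a density-zero exceptional set. The odd case reduces to the primality of $n - 4$ (since an odd decomposition $n - 2 = p_1 + p_2$ forces $p_1 = 2$); combining the two parities yields the claimed density.

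For (b), Goldbach's conjecture yields $n - 2 = p + q$ for every even $n - 2 \geq 4$, and I must then upgrade to $p \neq q$. If $p = q$ is the sole Goldbach representation, then $n - 2 = 2p$ for a small $p$; these residual cases will either fall under a mild strengthening of Theorem \ref{MAINCSCOR} permitting $p_1 \leq p_2$, or admit a direct ad hoc construction. For (c), Schinzel's conjecture (Conjecture \ref{SCHINZELCONJ} in $\S 3$) plays the analogous role for odd $n - 2$, delivering the required decomposition for all sufficiently large odd $n$ once the conjecture is stated in a form suited to the parity constraint.

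The principal obstacle throughout is the number-theoretic input: the three parts mirror precisely our current knowledge (theorem, conjecture, conjecture) about Goldbach-type representations for even and odd targets, respectively. A secondary but concrete difficulty is the gap between ``sum of two primes'' and ``sum of two \emph{distinct} primes'', handled in (a) by a density-zero discard and in (b) by a small-case check.
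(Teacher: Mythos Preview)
Your approach has a genuine gap in the odd case of part (a), and this propagates to part (c).

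For odd $n$, writing $n-2 = p_1 + p_2$ with $p_1 < p_2$ forces $p_1 = 2$, so you need $n-4$ prime. But the set of such $n$ has density $0$, not relative density $1$ among the odd integers; combining with the even case you obtain overall density $\tfrac12$, not $1$. The paper circumvents this by using a different local building block: for a prime $p$, the quantity $p^2+p+1 = \tfrac{p^3-1}{p-1}$ is the atom count of a primefree local CK-domain (Theorem~\ref{SAURABHTHM}b) with $d=3$). One then seeks, for odd $n$, a representation $n = (p_1^2+p_1+1) + (p_2+1)$, i.e., $n = f(p_1)+p_2$ with $f(x)=x^2+x+2$. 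Van der Corput's theorem (Theorem~\ref{VDCTHM}) gives that such representations exist for a relative-density-$1$ set of odd $n$, which is exactly what is needed.

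This is also the content of Schinzel's conjecture as stated in the paper: it is \emph{not} an odd-target Goldbach statement about $n-2$, but rather asserts that $n = f(p_1)+p_2$ is solvable for all sufficiently large odd $n$ when $f$ is a suitable quadratic. So your plan for (c) does not match Conjecture~\ref{SCHINZELCONJ}, and the same $d=3$ building block is the missing ingredient there as well.

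A smaller point: by invoking Theorem~\ref{SAURABHTHM} (which allows repeated prime powers $q_1,\ldots,q_m$) in place of Theorem~\ref{MAINCSCOR} (which requires distinct primes), the paper avoids the ``distinct primes'' headache you flag in (b); Goldbach then gives the result directly with no residual cases.
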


\begin{conj} Let $n$ be a positive integer.
\label{CONJ1}
\begin{enumerate}[noitemsep,topsep=0pt,label={\alph*}{\rm)},font=\rm]
\item There is a primefree \CK$(n,2;0)$-domain iff $n \geq 6$.
\item There is a primefree \CK$(n,3;0)$-domain iff $n \geq 9$.
\end{enumerate}
\end{conj}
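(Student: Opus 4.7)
The necessity of the lower bounds in both parts is immediate from Theorem~\ref{LITTLEMAIN}: the inequality $m \le n/3$ forces $n \ge 6$ when $m=2$ and $n \ge 9$ when $m=3$. The substance of the conjecture is thus the sufficiency direction, and my plan is to combine the conditional results of Theorems~\ref{MAINERTHM1} and~\ref{MAINERTHM2} with a small number of bespoke small-$n$ constructions.

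For part~(a), the even case would follow from Theorem~\ref{MAINERTHM2}(b) under the Goldbach conjecture, once one accommodates those small $n$ for which $n-2$ fails to be a sum of two \emph{distinct} primes; the only value in the relevant range is $n=6$ (where $4 = 2+2$), which requires a separate construction of a primefree \CK$(6,2;0)$-domain --- the natural approach being a pullback of two local primefree Cohen--Kaplansky examples glued along a common residue field. For the odd case, Theorem~\ref{MAINERTHM2}(c) handles all sufficiently large odd $n$ under Schinzel's conjecture, leaving a finite interval of small odd $n \ge 7$ to verify. Within that interval, whenever $n-4$ is itself prime (as for $n = 7, 9, 11$) one applies Theorem~\ref{MAINCSCOR} with primes $\{2, n-4\}$, and the sporadic remaining values (the first being $n = 13$, where $n-4=9$) must each be handled individually by explicit algebraic construction.

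For part~(b), Theorem~\ref{MAINERTHM1}(a) covers all even $n \ge 10$ unconditionally, and Theorem~\ref{MAINERTHM1}(c) covers all sufficiently large $n$, so what remains is a finite list of odd $n$ in the range $[9, N_1]$. Applying Theorem~\ref{MAINCSCOR} with $m = 3$ to odd $n$ requires a representation $n - 5 = p_2 + p_3$ with distinct odd primes $p_2 < p_3$, which exists for every odd $n$ in this range with two exceptions: $n = 9$ (where $n-5 = 4$) and $n = 11$ (where $n-5 = 6 = 3+3$). These two stubborn values need a direct construction of a primefree \CK$(n,3;0)$-domain, most naturally as a pullback of three local primefree Cohen--Kaplansky domains whose atom counts sum to $n$.

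The main obstacle is of course the reliance on Goldbach and Schinzel, which place an unconditional proof well beyond current methods; absent a new commutative-algebraic construction that bypasses additive number theory, the conjecture seems destined to remain conditional. A secondary difficulty --- less daunting but still nontrivial --- is verifying the existence of primefree \CK$(n,m;0)$-domains for the handful of exceptional small $n$ outside the reach of Theorem~\ref{MAINCSCOR}; while the pullback constructions sketched above are plausible, their success is not guaranteed, and could in principle even refute the conjecture at some small $n$ if an obstruction of Cohen--Kaplansky type appears.
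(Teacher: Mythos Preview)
Since this is a conjecture, the paper does not prove it either; $\S$3.10 assembles supporting arguments rather than a proof. Your outline is broadly reasonable but less streamlined than the paper's, and it carries some unnecessary burdens.

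The recurring source of friction is your reliance on Theorem~\ref{MAINCSCOR}, which requires \emph{distinct} primes, in places where Theorem~\ref{SAURABHTHM} suffices. The latter imposes no distinctness and, in its part~(b), allows local building blocks of size $\frac{q^d-1}{q-1}$ for arbitrary prime powers $q$ and $d\ge 2$. This dissolves essentially all of your ``bespoke'' small cases: $6=3+3$, $9=3+3+3$, $11=3+3+5$ (with $5=\frac{4^2-1}{4-1}$), and $13=3+10$ (with $10=\frac{9^2-1}{9-1}$) are all immediate from Theorem~\ref{SAURABHTHM}(b), with no pullback construction to be invented. In particular your worry that $n=6$ needs special treatment in the even case of part~(a) is unfounded: Theorem~\ref{MAINERTHM2}(b) as stated already covers every even $n\ge 6$ under Goldbach, since its proof goes through Theorem~\ref{SAURABHTHM} rather than Theorem~\ref{MAINCSCOR}.

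You also miss a reduction the paper makes explicit: part~(a) implies part~(b). Given a primefree \CK$(n,2;0)$-domain, Theorem~\ref{SAURABHTHM} lets one adjoin a primefree \CK$(3,1;0)$-domain to obtain a primefree \CK$(n+3,3;0)$-domain; thus Conjecture~(a) for $n\ge 6$ yields Conjecture~(b) for $n\ge 9$ directly, and your separate analysis of part~(b) is superfluous. For the odd values in part~(a), the paper's evidence is computational: it verifies that every odd $n\in[7,10^{10}]$ is a sum of two integers of the form $\frac{q^d-1}{q-1}$, which together with the verified range of Goldbach confirms both parts of the conjecture for all $n\le 10^{10}$. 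Your assessment of the ultimate obstacle---dependence on Goldbach and Schinzel---agrees with the paper's.
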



\begin{thm}
\label{MAINERPAUL}
\label{THM1.8}
The set of primes $n$ such that there is a primefree \CK$(n,1)$-domain has density $0$ inside the set of all primes.
\end{thm}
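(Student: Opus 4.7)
The plan is to invoke the Cohen-Kaplansky structure theorem (and its Anderson-Mott refinements) to constrain the atom count $n$ of a primefree local CK$(n)$-domain to a thin subset of $\Z^+$, then to show by sieving that this subset meets the primes in a set of density zero. Let $R$ be a primefree local CK$(n)$-domain with maximal ideal $\mathfrak{m}$ and residue field $\F_q$. By the Cohen-Kaplansky theorem, $R$ is a one-dimensional Noetherian local domain whose integral closure $\overline{R}$ is a finitely generated $R$-module and a semilocal PID, with finitely many maximal ideals of finite residue fields $\F_{q_1}, \ldots, \F_{q_s}$ (each $q_i$ a power of $q$); since $R$ is primefree and not a field, $R \subsetneq \overline{R}$, and the conductor $\mathfrak{c} = (R:\overline{R})$ is a proper nonzero ideal of $\overline{R}$.

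First I would express $n$ as an explicit polynomial-in-$q_i$ function of the structural data $(s, q_1, \ldots, q_s, \overline{R}/\mathfrak{c}, R/\mathfrak{c})$. The counting groups elements of $R^\bullet$ by their valuation vector in the value semigroup $v(R^\bullet) \subset \N^s$; at each valuation the associate classes are orbits of $(R/\mathfrak{c})^\times$ on suitable subsets of $(\overline{R}/\mathfrak{c})^\times$; and atoms of $R$ arise both at the value-semigroup atoms and at finitely many ``exceptional'' valuations where factorization fails to lift (as with the valuation-$4$ atoms of $\F_q[[t^2, t^5]]$, a primefree local CK$(2q^2,1)$-domain). In the simplest cases this yields $n = 2q$ (from $\F_q[[t^2, t^3]]$, a primefree local CK$(2q, 1)$-domain) or $n = p+1$ (from Theorem~\ref{THM1.2} with $m=1$); both formulas give prime $n$ only for $n=3$.

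Finally, I would bound the number of primes the formula can produce. For each fixed structural type, the number of prime-power tuples $(q_1, \ldots, q_s)$ yielding atom count $\leq X$ is $O(X^{s/\deg F})$ when the defining polynomial $F$ has total degree at least $2$, and is $O(X/(\log X)^2)$ by a Brun-type sieve when $F$ is linear (as in $n = q+c$, since primality of $n$ forces $n - c$ to be a prime power, a thin condition among primes). Since $\sum_i \log q_i$ and the length of $\overline{R}/\mathfrak{c}$ are each $O(\log n)$, only polylogarithmically many structural types can contribute to primes $\leq X$; summing over types gives $o(\pi(X))$, the required density-zero statement. The main obstacle is this last step --- controlling the sum over types uniformly --- which requires an Anderson-Mott-style bound on the number of distinct conductor configurations that realize a given atom count $n$, so that the union of density-zero contributions across types does not defeat the estimate for each individual type.
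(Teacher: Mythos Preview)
Your plan overlooks the key shortcut that the paper exploits: Cohen--Kaplansky's own corollary (recorded here as Theorem~\ref{1.4}b)) already says that if $n$ is \emph{prime} and there exists a primefree $\CK(n,1)$-domain, then $n$ is of the form $\frac{q^d-1}{q-1}$ for some prime power $q$ and $d\ge 2$. Primality of $n$ is used in an essential way to collapse the entire structural zoo you are trying to enumerate down to this single one-parameter family. With that in hand, the paper simply quotes Bateman--Stemmler \cite{Bateman-Stemmler62}, which bounds the number of primes $n\le x$ of the form $\frac{q^d-1}{q-1}$ by $\frac{50\sqrt{x}}{\log^2 x}$, and compares to $\pi(x)\sim x/\log x$. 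That is the whole proof.

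By contrast, your program attempts to parametrize \emph{all} possible atom counts of primefree local CK-domains by structural type and then sieve each formula against the primes. This is essentially the (open, hard) classification problem for local CK-domains by atom count, and it is far more than the theorem requires. The ``main obstacle'' you flag is real and you have not resolved it: you need a uniform bound on the number of conductor/subring configurations contributing atom count $\le X$, and nothing in Anderson--Mott gives this off the shelf. Your treatment of the ``linear $F$'' case is also incomplete --- you assert a Brun-type $O(X/(\log X)^2)$ bound for primes $n$ with $n-c$ a prime power, but you have not controlled which constants $c$ arise or shown they are bounded independently of $X$, so the sum over types could swamp the savings. In short, the approach is not wrong in spirit, but it is unfinished at exactly the point where the difficulty lies, and it bypasses the one ingredient (Theorem~\ref{1.4}b)) that makes the problem tractable.
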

\noindent
We will use results in the theory of Cohen-Kaplansky domains due to Cohen-Kaplansky \cite{Cohen-Kaplansky46} and Anderson-Mott \cite{Anderson-Mott92} as well as the following result.

\begin{thm}[Globalization Theorem]\mbox{ }
\label{SAURABHTHM}
\label{THM1.9}
\begin{enumerate}[noitemsep,topsep=0pt,label={\alph*}{\rm)},font=\rm]
\item Let $q$ be either $0$ or a prime power.  Let $M,n_1,\ldots,n_M,m_1,\ldots,m_M \in \Z^+$.  Suppose for all $1 \leq j \leq M$ there
is a primefree \CK$(n_j,m_j;q)$-domain.  Then there is a primefree \CK$(\sum_{j=1}^M n_j,\sum_{j=1}^M m_j;q)$-domain.
\item If $q_1,\ldots,q_m$ are prime powers and $d_1,\ldots,d_m \geq 2$ are integers, there is a primefree \CK$(\sum_{j=1}^m \frac{q_j^{d_j}-1}{q_j-1},m;0)$-domain.  If $q_1,\ldots,q_m$ are all powers of a prime power $q$, there is a primefree
\CK$(\sum_{j=1}^m \frac{q_j^{d_j}-1}{q_j-1},m;q)$-domain.
\end{enumerate}
\end{thm}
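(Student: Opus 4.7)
I would reduce part (b) to part (a) by using the classical primefree local Cohen--Kaplansky examples as building blocks. In the $\F_q$-algebra case, take $R_j := \F_{q_j} + t\,\F_{q_j^{d_j}}[t]_{(t)}$; in characteristic zero, take a non-maximal order of the form $\OO_{K_j,\mathfrak q_j} + \mathfrak p_j\,\OO_{L_j,\mathfrak p_j}$ in an extension $L_j/K_j$ of number fields in which a prime $\mathfrak p_j$ of $L_j$ lies over a prime $\mathfrak q_j$ of $K_j$ realizing the residue-field extension $\F_{q_j^{d_j}}/\F_{q_j}$. A direct enumeration of associate classes of elements of the form $\pi u$, with $\pi$ a uniformizer and $u$ a unit of the integral closure, gives a bijection between the atoms of $R_j$ (up to associate) and $\F_{q_j^{d_j}}^{\times}/\F_{q_j}^{\times}$, yielding the required $(q_j^{d_j}-1)/(q_j-1)$ atoms; each such atom generates a proper sub-ideal of the maximal ideal of $R_j$, so $R_j$ is primefree.

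\textbf{Construction for (a).} For each primefree $\CK(n_j,m_j;q)$-domain $R_j$, the Anderson--Mott classification says its integral closure $\widetilde R_j$ is a semilocal PID with finite residue fields, the conductor $\mathfrak c_j := (R_j : \widetilde R_j)$ is a non-zero ideal contained in every maximal ideal of both rings, and one has the conductor-square presentation $R_j = \widetilde R_j \times_{\widetilde R_j/\mathfrak c_j} R_j/\mathfrak c_j$. The plan is to globalize: construct a single semilocal PID $\widetilde R$ whose maximal ideals biject with the disjoint union of those of the $\widetilde R_j$ and whose localizations match those of the corresponding $\widetilde R_j$. In the $\F_q$-algebra case, $\widetilde R$ can be taken as a localization of a polynomial ring over a large enough finite extension of $\F_q$, using distinct closed points to realize distinct maximal ideals; in characteristic zero, $\widetilde R$ is a suitable localization of the ring of integers of a number field with prescribed local completions at $\sum_j m_j$ primes (such a number field exists by weak approximation). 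Then set
\[
R := \widetilde R \times_{\prod_j \widetilde R_j/\mathfrak c_j} \prod_j R_j/\mathfrak c_j,
\]
where $\widetilde R \twoheadrightarrow \prod_j \widetilde R_j/\mathfrak c_j$ is obtained by composing localization at each maximal ideal with reduction modulo the corresponding conductor. As a subring of the domain $\widetilde R$, the ring $R$ is a domain; by design $\MaxSpec R$ has $\sum_j m_j$ elements, and the localization of $R$ at each maximal ideal is isomorphic to the localization of the corresponding $R_j$ at the matching maximal ideal.

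\textbf{Verification and main obstacle.} To conclude, I verify that $R$ has $\sum_j n_j$ atoms (modulo associates) and is primefree. The key CK-specific input, extracted from Anderson--Mott, is that in a primefree CK-domain every atom generates an ideal primary to a unique maximal ideal, so the associate classes of atoms partition by $\MaxSpec$ and the count at each maximal ideal is a purely local invariant. Because the localizations of $R$ agree with those of the $R_j$, the atom count of $R$ is $\sum_j n_j$ and primefreeness transfers from the $R_j$. I expect the principal technical obstacle to be the globalization step: realizing all the prescribed local data of the $\widetilde R_j$ simultaneously inside one semilocal PID, which in characteristic zero requires number-field existence results (via weak approximation), and in characteristic $p$ requires a careful choice of closed points on an affine line over a sufficiently large finite extension of $\F_q$.
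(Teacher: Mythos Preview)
Your approach is essentially the paper's: present each $R_j$ as a composite (equivalently, a conductor square) inside its normalization, build one global semilocal PID $\widetilde R$ realizing all the required local data via weak approximation, and then pull back the product of the $R_j/\mathfrak c_j$ along the reduction map. The paper organizes this slightly differently---it first reduces to the case $m_j=1$ using Theorem \ref{02.2}, then completes each local piece before globalizing---but the substance is the same.

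Two technical points deserve attention. First, your assertion that ``the localization of $R$ at each maximal ideal is isomorphic to the localization of the corresponding $R_j$'' is too strong as stated: the localizations live in different fraction fields and there is no reason for them to be isomorphic as rings. What your construction actually guarantees is that the \emph{completions} agree, since the fiber-product data is determined modulo the conductor. This suffices, because \cite[Thm.~9]{Cohen-Kaplansky46} shows that completion preserves the atom count of a local Cohen--Kaplansky domain; the paper invokes exactly this step. Second, in the $\F_q$-algebra case your suggestion of ``a polynomial ring over a large enough finite extension of $\F_q$'' goes in the wrong direction: enlarging the constant field enlarges every residue field, so the prescribed residue fields $\F_{q^{a_i}}$ would no longer be attainable. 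The correct move (as in the paper, following Theorem \ref{03.2}) is to work over $\F_q(t)$ itself or a finite extension thereof, choosing places whose completions realize the required $\F_{q^{a_i}}((s))$.
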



Here is another application of Theorem \ref{SAURABHTHM}.

\begin{thm}
\label{MAINQ}
\label{THM1.10}
Let $q$ be a prime power.
\begin{enumerate}[noitemsep,topsep=0pt,label={\alph*}{\rm)},font=\rm]
\item If $R$ is a primefree atomic domain which is an $\F_q$-algebra and not a field,  then $R$ has at least $q+1$ atoms.
\item  For all $n \geq 3$, there is a primefree \CK$(n;2)$-domain.
\item If $q$ is even, then for all $n \geq 2q^2-q$ there is a primefree \CK$(n;q)$-domain.
\item If $q$ is odd, then for all $n \geq 2q^2-q+1$ there is a primefree \CK$(n;q)$-domain.
\end{enumerate}
\end{thm}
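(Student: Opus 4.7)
The plan is to treat the four parts in order, with all three of (b)--(d) resting on a single new local construction.

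For part (a), if $R$ has infinitely many atoms we are done; otherwise $R$ is Cohen--Kaplansky. Since a semilocal Dedekind domain is a PID and a primefree PID must be a field, some localization $R_P$ at a maximal ideal fails to be a DVR. This $R_P$ is a local primefree atomic $\F_q$-algebra whose maximal ideal $\mm$ satisfies $\dim_{R_P/\mm} \mm/\mm^2 \geq 2$. In an atomic local domain every element of $\mm \setminus \mm^2$ is an atom (a proper factorization would place all factors in $\mm$ and so the product in $\mm^2$). Picking atoms $a, b$ of $R_P$ whose images in $\mm/\mm^2$ are linearly independent over $R_P/\mm$, the $q + 1$ elements $b,\, a,\, a+b,\, a+2b,\, \ldots,\, a+(q-1)b$ (coefficients in $\F_q \subseteq R_P/\mm$) are atoms whose images lie on pairwise distinct lines in $\PP(\mm/\mm^2)$, hence are pairwise non-associate. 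Finally, any $R_P$-atom is associate in $R_P$ to $a/1$ for some $a \in R \cap P$, and factoring $a$ in $R$ shows that exactly one factor lies in $P$ and is an $R$-atom representing the given class; hence the map from $R$-atoms in $P$ to $R_P$-atoms (both modulo associates) is surjective, forcing $R$ to have at least $q + 1$ atoms.

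The key new construction for (b)--(d) is: $R := \F_q[[t^2, t^3]]$ is a primefree local \CK$(2q, 1; q)$-domain. It is a one-dimensional Noetherian local domain with maximal ideal $\mm = (t^2, t^3)$ non-principal (else $R$ would equal its normalization $\F_q[[t]]$), so $R$ is not a DVR and is primefree. Computing $\mm^2 = t^4\F_q[[t]]$, the quotient $\mm/\mm^2$ has $\F_q$-basis $\{\bar t^2, \bar t^3\}$, so $\PP(\mm/\mm^2) = \PP^1(\F_q)$ has $q+1$ points. For each of the $q$ lines $[\bar t^2 + d \bar t^3]$, $d \in \F_q$, the orbit $R^\times(t^2 + d t^3)$ equals $\F_q^\times(t^2 + d t^3) + t^4\F_q[[t]]$, which is the entire preimage of the line in $\mm \setminus \mm^2$, giving one atom per line. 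On the remaining line $[\bar t^3]$, however, $\mm \cdot t^3 = t^5\F_q[[t]] \subsetneq t^4\F_q[[t]] = \mm^2$, so the orbit of $t^3$ is only $\F_q^\times t^3 + t^5\F_q[[t]]$ and the elements $\{t^3 + c t^4 : c \in \F_q\}$ represent $q$ distinct orbit classes. The total is $q + q = 2q$ atoms.

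The three existence parts now follow by combining this with Theorem \ref{SAURABHTHM}. For (b) (case $q = 2$): realize \CK$(3;2)$ by \ref{SAURABHTHM}(b) with $(q_j, d_j) = (2, 2)$, \CK$(4;2)$ by $\F_2[[t^2, t^3]]$, and \CK$(5;2)$ by \ref{SAURABHTHM}(b) with $(q_j, d_j) = (4, 2)$; for $n \geq 6$ induct on $n$ via $n = 3 + (n-3)$ using Globalization (Theorem \ref{SAURABHTHM}(a)). For (c) ($q$ even): $\gcd(q+1, 2q) = 1$, so by Sylvester--Frobenius every $n \geq (q+1)(2q) - (q+1) - 2q + 1 = 2q^2 - q$ equals $\alpha(q+1) + \beta(2q)$ for some $\alpha, \beta \geq 0$, and Globalization applied to $\alpha$ copies of \CK$(q+1; q)$ and $\beta$ copies of \CK$(2q; q)$ produces a primefree \CK$(n; q)$-domain. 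For (d) ($q$ odd): $\gcd(q+1, 2q) = 2$, so $\langle q+1, 2q\rangle$ consists of even integers with Frobenius number $q^2 - 2q - 1$ within them, covering all even $n \geq 2q^2 - q + 1$; for odd $n \geq 2q^2 - q + 2$, subtract the odd basic value $q^2 + q + 1$ (from \ref{SAURABHTHM}(b) with $(q_j, d_j) = (q, 3)$) to reduce to an even value $\geq (q-1)^2$ inside $\langle q+1, 2q\rangle$, then apply Globalization.

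The main technical obstacle is the orbit computation for $\F_q[[t^2, t^3]]$, in particular exhibiting the $q$ distinct non-associate atoms on the line $[\bar t^3]$. These extra orbits exist precisely because the would-be unit $1 + t \in \F_q[[t]]$ relating $t^3$ and $t^3 + t^4$ in the integral closure is not present in $R^\times$. A secondary subtlety in part (a) is that primefreeness of $R$ does not force every $R_P$ to be non-DVR (as in primefree Dedekind examples), only at least one, which is all one needs via the surjection argument.
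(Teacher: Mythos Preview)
Your proof is correct and follows essentially the same strategy as the paper: the local building blocks $q+1$, $2q$, and $q^2+q+1$ combined via the Globalization Theorem and the Sylvester--Frobenius bound. The paper obtains the \CK$(2q,1;q)$ building block by citing Anderson--Mott (Theorem~\ref{1.4}c) with $d=1$, $e=2$, noting that $\F_q+t^2\F_{q}[[t]]=\F_q[[t^2,t^3]]$), whereas you compute the atom count by hand; your orbit analysis on $\mm\setminus\mm^2$ is right, but to conclude that the total is exactly $2q$ you should also observe that $\mm^2=t^4\F_q[[t]]$ contains no atoms (any $x$ with $t$-adic valuation $\geq 4$ factors as $t^2\cdot (x/t^2)$ with both factors in $\mm$). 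In part~(b) the paper uses the pair $\{3,5\}$ with Sylvester and handles $n\in\{4,7\}$ separately via Theorem~\ref{1.4}c), while your induction on $n$ using the triple $\{3,4,5\}$ is an equally valid variant. One expository remark: once you have reduced to the Cohen--Kaplansky case in part~(a), Theorem~\ref{02.2}c) actually shows that \emph{every} localization $R_{\mm}$ fails to be a DVR, so your closing caveat about ``only at least one'' is unnecessary there (it applies only in the infinite-atom Dedekind situation you already disposed of).
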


\subsection{Structure of the Paper}
In \S2 we give material on Cohen-Kaplansky domains.  In $\S$3 we will prove Theorems \ref{THM1.3}--\ref{THM1.7} and \ref{THM1.8}--\ref{THM1.10} and give supporting arguments for Conjecture \ref{CONJ1}.  Final comments are given in $\S$4.

\section{Preliminaries on Cohen-Kaplansky Domains}
 There is a beautiful structure theory for Cohen-Kaplansky domains pioneered by Cohen-Kaplansky \cite{Cohen-Kaplansky46} and enhanced by Anderson-Mott \cite{Anderson-Mott92}.  In this section we recall some of these results -- and quick consequences of them -- for later use.

\begin{thm}[\cite{Cohen-Kaplansky46}]
\label{02.1}
A Cohen-Kaplansky domain is Noetherian and has only finitely many nonzero prime all ideals, all of which are maximal.
\end{thm}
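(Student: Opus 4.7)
The key structural fact driving the proof is that every nonzero prime ideal $\mathfrak{p}$ of a Cohen--Kaplansky domain $R$ is generated by the atoms it contains. Fix representatives $p_1,\ldots,p_n$ for the associate classes of atoms, and write $S(\mathfrak{p}) = \{p_i : p_i \in \mathfrak{p}\}$. I would first establish the identity $\mathfrak{p} = (S(\mathfrak{p}))$. The inclusion $\supseteq$ is immediate. For the reverse, given $0 \ne x \in \mathfrak{p}$, atomicity gives a factorization $x = u\,q_1 \cdots q_k$ with $u \in R^{\times}$ and each $q_j$ an atom; since $\mathfrak{p}$ is prime, some $q_j$ lies in $\mathfrak{p}$, and since $q_j$ is associate to some $p_i \in S(\mathfrak{p})$, we conclude $x \in (p_i) \subseteq (S(\mathfrak{p}))$.

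Noetherianity then follows: every nonzero prime is finitely generated (by at most $n$ atoms), and $(0)$ is too, so by Cohen's theorem $R$ is Noetherian. Finiteness of the spectrum follows equally quickly: the map $\mathfrak{p} \mapsto S(\mathfrak{p})$ from nonzero primes to nonempty subsets of $\{p_1,\ldots,p_n\}$ is injective (since $\mathfrak{p}$ is recovered as $(S(\mathfrak{p}))$), yielding at most $2^n - 1$ nonzero primes.

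For the final assertion that every nonzero prime is maximal -- equivalently $\dim R \le 1$ -- suppose some prime $\mathfrak{q}$ of $R$ had height $\ge 2$. Then $R_{\mathfrak{q}}$ would be a Noetherian local domain of dimension $\ge 2$ whose spectrum is a subset of the (finite) spectrum of $R$. But any Noetherian local domain $(A,\mathfrak{m})$ of dimension $\ge 2$ has infinitely many height-one primes: otherwise, listing them as $\mathfrak{p}_1,\ldots,\mathfrak{p}_k$, prime avoidance (using that $\mathfrak{m} \not\subseteq \mathfrak{p}_i$ on dimension grounds) produces a nonzero $x \in \mathfrak{m} \setminus \bigcup_i \mathfrak{p}_i$, and then Krull's Hauptidealsatz exhibits a height-one prime minimal over $(x)$ that must equal some $\mathfrak{p}_i$, a contradiction.

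The plan is clean; the only input genuinely specific to the Cohen--Kaplansky hypothesis is the atom-based description of primes in the first paragraph, while the dimension argument is standard commutative algebra. If there is a main obstacle, it is the minor one of remembering to invoke Cohen's theorem for Noetherianity, rather than attempting to show every ideal (not merely every prime) is finitely generated by direct means.
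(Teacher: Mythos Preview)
Your argument is correct. Note, however, that the paper does not supply its own proof of this theorem: it is stated with a citation to \cite{Cohen-Kaplansky46} and used as background. So there is no ``paper's proof'' to compare against in the strict sense. That said, your approach is precisely the classical one from the original Cohen--Kaplansky paper: the identity $\mathfrak{p} = (S(\mathfrak{p}))$ for nonzero primes is their key observation, from which Noetherianity (via Cohen's theorem) and finiteness of $\operatorname{Spec} R \setminus \{(0)\}$ follow just as you indicate. Your dimension argument via prime avoidance and the Hauptidealsatz is the standard way to rule out height $\geq 2$ primes in a Noetherian domain with finite spectrum, and it is clean as written.
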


\begin{thm}
\label{1.2}
\label{02.2}
Let $R$ be a semilocal domain, with maximal ideals $\pp_1,\ldots,\pp_m$.
\begin{enumerate}[noitemsep,topsep=0pt,label={\alph*}{\rm)},font=\rm]
\item $R$ is Cohen-Kaplansky iff $R_{\mm_j}$ is Cohen-Kaplansky for all $1 \leq j \leq m$.
\item Every atom $p$ of $R$ lies in $\pp_j$ for exactly one $j$, and $R \hookrightarrow R_{\mm_j}$ induces a bijection from the atoms of $R$ lying in $\mm_j$ to the atoms of $R_{\mm_j}$.
\item Suppose $R$ is Noetherian of dimension one.  The following are equivalent:
\begin{enumerate}[noitemsep,topsep=0pt,label={{\rm(}\roman*{\rm)}},font=\rm]
\item $R$ has a prime element.
\item For at least one $j$, $R_{\mm_j}$ has a prime element.
\item For at least one $j$, $R_{\mm_j}$ is a DVR.
\end{enumerate}
\end{enumerate}
\end{thm}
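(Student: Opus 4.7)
The plan is to treat (b) first, since the local--global bijection of atoms it asserts drives both (a) and (c). Throughout, $R$ will be taken Noetherian (the setting of Cohen--Kaplansky applications, by Theorem~\ref{02.1}), so that primary decomposition is available.

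For (b), the claim that every atom lies in at least one $\mathfrak{m}_j$ is the standard fact about non-units and maximal ideals. For ``exactly one'' I would argue by contradiction using primary decomposition. If an atom $p$ lies in $\mathfrak{m}_i \cap \mathfrak{m}_j$ with $i \ne j$, the primary decomposition $(p) = \bigcap_k \mathfrak{q}_k$ has at least two components, the $\mathfrak{q}_k$ are pairwise comaximal (powers of distinct maximals are coprime), and CRT splits $R/(p) \cong \prod_k R/\mathfrak{q}_k$ into at least two factors. Lifting a nontrivial idempotent produces $e \in R$ with $e$ and $1-e$ both non-units and $e(1-e) \in (p)$. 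The subtle step is converting this into a genuine factorization $p = xy$ with $x, y$ non-units; since atoms of a general domain need not be prime, one cannot immediately conclude $p \mid e$ or $p \mid 1-e$ from $p \mid e(1-e)$, and the closing argument uses the extra structure available in one-dimensional Noetherian semilocal domains (e.g., the components $\mathfrak{q}_k$ produce principal contributions in the semilocal integral closure $\bar R$, and pulling back gives a nontrivial factorization of $p$ in $R$). For the bijection, the forward direction (atom of $R$ in $\mathfrak{m}_j$ goes to atom of $R_{\mathfrak{m}_j}$) is by denominator-clearing, and injectivity on associate classes is similar. For surjectivity, given an atom $\alpha \in R_{\mathfrak{m}_j}$, write $\alpha = a/b$ with $a \in \mathfrak{m}_j$, $b \notin \mathfrak{m}_j$, factor $a$ into atoms of $R$ (using atomicity of $R$, available in the CK setting), and observe that exactly one factor is associate to $\alpha$ in $R_{\mathfrak{m}_j}$ (the others being units there since they lie outside $\mathfrak{m}_j$).

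Given (b), part (a) is routine: $(\Rightarrow)$ distributes the finite atom set of $R$ into the localizations, each receiving a finite subset; $(\Leftarrow)$ assembles finitely many atoms per localization into finitely many atoms of $R$, and atomicity of $R$ follows from the semilocal reconstruction argument (factor a non-unit inside each $R_{\mathfrak{m}_j}$ and patch via CRT). For (c), equivalence (ii)$\Leftrightarrow$(iii) is classical: a prime element $\pi$ of a one-dimensional Noetherian local domain generates a nonzero prime ideal forced to be the maximal, so the ring is a DVR, and conversely a DVR has prime uniformizer. For (i)$\Leftrightarrow$(ii), if $p \in R$ is prime then (b) puts it in exactly one $\mathfrak{m}_j$ and $p$ is a unit in every other $R_{\mathfrak{m}_k}$, so the natural map $R/(p) \to R_{\mathfrak{m}_j}/(pR_{\mathfrak{m}_j})$ is an isomorphism and primality transfers; conversely, a prime in some $R_{\mathfrak{m}_j}$ comes from an atom $p \in R$ by surjectivity in (b), and the same isomorphism shows $(p)$ is prime in $R$.

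The main obstacle will be the uniqueness step in (b): the idempotent lift gives $p \mid e(1-e)$, which would be a direct contradiction if $p$ were prime, but atoms need not be prime, so an additional argument --- exploiting either the semilocal integral closure (as in Cohen--Kaplansky) or the direct one-dimensional Noetherian structure --- is required to promote this to a genuine factorization of $p$ in $R$. Everything else will follow by formal CRT and localization manipulations once (b) is in place.
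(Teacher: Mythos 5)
The first thing to say is that the paper does not prove parts a) and b) at all: it quotes them as results of Cohen--Kaplansky \cite{Cohen-Kaplansky46} and only supplies an argument for part c). Your treatment of part c) is correct and essentially equivalent to the paper's, with one small difference of route: for getting back from a local prime to a prime of $R$ you use the isomorphism $R/(p) \cong R_{\mm_j}/(pR_{\mm_j})$ (valid once $\mm_j$ is known to be the \emph{only} maximal ideal containing the atom $p$, since $\dim R = 1$ makes $R/(p)$ Artinian and local), whereas the paper goes (iii)$\Rightarrow$(i) directly: a DVR localization has a single atom $p_j$, and atomicity of $R$ forces every element of $\mm_j$ to be divisible by $p_j$, so $\mm_j = (p_j)$ is prime. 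Both are fine.

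The genuine gap is exactly where you flag it, and flagging it does not fill it. Your idempotent-lifting argument for the ``exactly one'' claim in b), run to completion, yields nonunits $e$ and $1-e$ with $p \mid e(1-e)$ but $p \nmid e$ and $p \nmid (1-e)$; that proves $p$ is not \emph{prime}, which is not what is needed, and the sentence about ``principal contributions in the semilocal integral closure'' is a placeholder rather than an argument (note that good behavior of $\overline{R}$, e.g.\ $(R:\overline{R}) \neq 0$, is itself part of the Cohen--Kaplansky/Anderson--Mott structure theory \cite{Anderson-Mott92} you would be presupposing). A second unproved step is the $(\Leftarrow)$ direction of a): atomicity of $R$ given atomicity of all $R_{\mm_j}$ is not a formal consequence of CRT --- atomicity neither localizes nor globalizes in general --- so ``patch via CRT'' is an assertion, not a proof. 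Both statements are substantive theorems of \cite{Cohen-Kaplansky46}, which is presumably why the paper imports them rather than reproving them. Finally, be aware that b) is simply false for a general Noetherian semilocal domain: in $k[x,y]$ semilocalized at $(x,y)$ and $(x-1,y)$, the prime element $y$ is an atom lying in both maximal ideals. So any correct proof must use dimension one (or the Cohen--Kaplansky hypothesis) essentially; your sketch invokes it only implicitly, when you assume the primary components of $(p)$ are primary to maximal ideals.
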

\begin{proof}
Parts a) and b) are results from \cite{Cohen-Kaplansky46}.  c) (i) $\implies$ (ii): if $p \in R$ is prime, then since $\dim R = 1$ we have $(p) = \pp_j$ for some $j$ and $p$ is a prime element of $R_j$.
(ii) $\iff$ (iii): for all $j$, $R_{\mm_j}$ is one-dimensional local Noetherian, hence is a DVR iff the maximal ideal is principal \cite[Thm. 17.19]{Clark-CA}.
(iii) $\implies$ (i): if $R_{\mm_j}$ is a DVR for some $j$, by part b) there is exactly one atom $p_j \in \mm_j$.  Since $R$ is atomic and $\pp_j$ is prime, every element of $\mm_j$ is divisible by $p_j$, so $\mm_j = (p_j)$ and $p_j$ is prime.
\end{proof}

\begin{lemma}
\label{1.3}
\label{02.3}
\label{CKLEMMA}
Let $(R,\mm)$ be a local atomic domain with residue field $k = R/\mm$.
\begin{enumerate}[noitemsep,topsep=0pt,label={\alph*}{\rm)},font=\rm]
\item Every element of $\mm \setminus \mm^2$ is an atom of $R$.
\item If two atoms $p,p' \in \mm \setminus \mm^2$ are associate, then $p \pmod{\mm^2}$ and $p' \pmod{\mm^2}$ generate the same one-dimensional $k$-subspace of $\mm/\mm^2$.
\item If $(R,\mm)$ is a primefree Cohen-Kaplansky domain then $k \cong \F_q$ is a finite field, $d = \dim_k \mm/\mm^2$ is finite, and $R$ has \emph{at least} $\# \PP^{d-1}(\F_q) = \frac{q^d-1}{q-1}$ atoms.
\item A primefree Cohen-Kaplansky domain has at least $3$ atoms.
\end{enumerate}
\end{lemma}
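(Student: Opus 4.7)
For part (a), take $x \in \mm \setminus \mm^2$. Since $x \notin \mm^2$ we have $x \ne 0$, and since $x \in \mm$ it is a nonunit, so $x \in R^\circ$. If $x = yz$ with neither $y$ nor $z$ a unit, both would lie in $\mm$, forcing $x \in \mm^2$, a contradiction. For part (b), if $p' = up$ for some $u \in R^\times$, then reducing modulo $\mm^2$ gives $\overline{p'} = \bar{u}\,\overline{p}$ with $\bar{u} \in k^\times$, so $\overline{p}$ and $\overline{p'}$ span the same one-dimensional $k$-subspace of $\mm/\mm^2$.

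For part (c), I would proceed in three steps. First, from Theorem \ref{02.1} the ring $R$ is Noetherian, so $\mm$ is finitely generated and $\mm/\mm^2$ is a finite-dimensional $k$-vector space. Second, I would verify that $d = \dim_k \mm/\mm^2 \geq 2$: if $d = 1$, Nakayama's lemma makes $\mm$ principal, whence $R$ is a one-dimensional local Noetherian domain with principal maximal ideal, i.e., a DVR, whose uniformizer is a prime element---contradicting primefreeness. Third, and this is the crux, parts (a) and (b) together produce an injection from the set of associate classes of atoms lying in $\mm \setminus \mm^2$ into the projective space $\PP(\mm/\mm^2) = \PP^{d-1}(k)$. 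Since $R$ has only finitely many associate classes of atoms, and $d \geq 2$, finiteness of $\PP^{d-1}(k)$ forces $k$ to be finite; write $k = \F_q$. Conversely, every line in $\mm/\mm^2$ lifts (by choosing any nonzero representative) to an element of $\mm \setminus \mm^2$, which is an atom by (a), and distinct lines yield non-associate atoms by (b); this gives the lower bound $\#\PP^{d-1}(\F_q) = (q^d-1)/(q-1)$. The main obstacle is precisely this deduction of finiteness of $k$ from the Cohen-Kaplansky hypothesis; once the picture of associate classes as points of $\PP(\mm/\mm^2)$ is in place, the rest is bookkeeping.

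For part (d), Theorem \ref{02.1} gives that $R$ is Noetherian semilocal with all nonzero primes maximal; let $\mm_1, \ldots, \mm_m$ be the maximal ideals. By Theorem \ref{02.2}(a) each $R_{\mm_j}$ is Cohen-Kaplansky, and by Theorem \ref{02.2}(c) primefreeness of $R$ passes to each localization $R_{\mm_j}$. Applying (c) to each local factor gives at least $(q_j^{d_j}-1)/(q_j-1) \geq (2^2-1)/(2-1) = 3$ atoms per factor (using $q_j \geq 2$ and $d_j \geq 2$). Finally, Theorem \ref{02.2}(b) identifies the associate classes of atoms of $R$ with the disjoint union over $j$ of the associate classes of atoms of $R_{\mm_j}$, for a total of at least $3m \geq 3$ atoms in $R$.
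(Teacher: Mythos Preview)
Your argument tracks the paper's approach, and parts (a), (b), and (d) are correct. In part (c), however, the third step is mis-stated. Parts (a) and (b) do \emph{not} produce an injection from associate classes of atoms into $\PP(\mm/\mm^2)$: part (b) only says that associate atoms determine the same line, i.e., that the map $[\text{atom}] \mapsto \text{line}$ is well-defined on associate classes, not that it is injective. And even granting such an injection, finiteness of its domain (the associate classes) would tell you nothing about the cardinality of its codomain $\PP^{d-1}(k)$; so the clause ``finiteness of $\PP^{d-1}(k)$ forces $k$ to be finite'' does not follow from what precedes it.

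The argument you actually need is the one you wrote under ``Conversely'': lifting each line in $\mm/\mm^2$ to a nonzero representative gives, by (a), an atom, and by the contrapositive of (b), distinct lines give non-associate atoms. This is an injection \emph{from} $\PP^{d-1}(k)$ \emph{into} the set of associate classes of atoms. Since the latter set is finite (Cohen--Kaplansky) and $d \geq 2$, this forces $\PP^{d-1}(k)$ to be finite, hence $k$ finite; and the same injection gives the lower bound $(q^d-1)/(q-1)$. So the pieces are all present --- you just need to reverse the direction of the injection and let the finiteness flow the right way. This is exactly how the paper argues (tersely): ``choosing a nonzero element from each one-dimensional subspace of $\mm/\mm^2$ gives nonassociate atoms.''
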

\begin{proof}
Parts a) and b) are left to the reader.  c) $R$ is a one-dimensional Noetherian local ring which is not a DVR, so
$1 < \dim_k \mm/\mm^2 < \aleph_0$.  By parts a) and b), choosing a nonzero element from each one-dimensional subspace of $\mm/\mm^2$ gives nonassociate atoms.  This set is in bijection with the set of lines through the origin of the $\F_q$-vector space $\mm/\mm^2$, hence with $\PP^{d-1}(\F_q)$.  d) By Theorem \ref{02.2} we reduce to the local case.  Then, with notation as above we have at least $\# \PP^{d-1}(\F_q) \geq \# \PP^1(\F_2) = 3$ atoms.
\end{proof}

\begin{thm}
\label{1.4}\mbox{ }
\begin{enumerate}[noitemsep,topsep=0pt,label={\alph*}{\rm)},font=\rm]
\item \cite[Thm. 13]{Cohen-Kaplansky46} For every prime power $q$ and $d \geq 2$, there is a primefree \CK$(\frac{q^d-1}{q-1},1)$-domain.
\item \cite[Cor., p. 475]{Cohen-Kaplansky46} If there is a primefree \CK$(n,1)$-domain for a prime number $n$, then $n$ is of the form $\frac{q^d-1}{q-1}$ for a prime power $q$ and an integer $d \geq 2$.
\item \cite[Cor. 7.2]{Anderson-Mott92} For any prime power $q$ and $d,e \in \Z^+$, $\F_q + t^e \F_{q^d}[[t]]$ is a \CK$(e \frac{q^d-1}{q-1} q^{d(e-1)},1;q)$-domain.  It is primefree unless $(d,e) = (1,1)$.
\end{enumerate}
\end{thm}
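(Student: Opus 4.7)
Plan for Theorem \ref{1.4}. Parts (a) and (c) are explicit constructions, and (a) is the $e=1$ specialization of (c), so I would establish (c) first and read off (a). Part (b) is a converse restricted to primes, which will be the main obstacle.

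\textbf{For (c).} Set $V := \F_{q^d}[[t]]$ and $R := \F_q + t^e V$. A direct coefficient comparison in $(a + t^e f)(b + t^e g) = 1$ shows $R^\times = \F_q^\times + t^e V$, so $R$ is local with maximal ideal $\mm = t^e V$ and residue field $\F_q$; Noetherianity and Krull dimension one follow because $V$ is a finitely generated $R$-module with the same fraction field as $R$ (apply Eakin--Nagata and integral dependence). Each nonzero nonunit of $R$ has a unique form $t^k u$ with $u \in V^\times$ and $k \geq e$, and I would show it is an atom iff $e \leq k \leq 2e-1$: if $k \geq 2e$ then $t^k u = t^e \cdot (t^{k-e} u)$ is a nontrivial factorization in $R$; conversely, any $R$-factorization $t^k u = (t^a u_1)(t^b u_2)$ into non-units forces $a,b \geq e$, whence $k \geq 2e$. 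To count associate classes, two atoms $t^k u$ and $t^{k'} u'$ are associate iff $k = k'$ and there exists $\lambda \in \F_q^\times$ with $u_i' = \lambda u_i$ for $0 \leq i \leq e-1$ --- any discrepancy above index $e-1$ can always be absorbed into an element of $1 + t^e V \subseteq R^\times$. The resulting free $\F_q^\times$-action on $\F_{q^d}^\times \times \F_{q^d}^{e-1}$ produces $\frac{(q^d-1)q^{d(e-1)}}{q-1}$ classes per admissible $k$, giving $e \cdot \frac{q^d-1}{q-1} \cdot q^{d(e-1)}$ atoms in total. Finally, $\dim_{\F_q} \mm/\mm^2 = ed$, so Theorem \ref{02.2}(c) yields primefreeness iff $(d,e) \neq (1,1)$. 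Part (a) is the case $e = 1$.

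\textbf{For (b).} Let $R$ be a primefree \CK$(n,1)$-domain with $n$ prime, maximal ideal $\mm$, $q := \#R/\mm$, and $d := \dim_{R/\mm} \mm/\mm^2$. Lemma \ref{CKLEMMA}(c) already gives $d \geq 2$ and $n \geq \frac{q^d-1}{q-1} \geq 3$. The key claim is $\frac{q^d-1}{q-1} \mid n$; once this is known, primality of $n$ together with $\frac{q^d-1}{q-1} \geq 3$ forces $n = \frac{q^d-1}{q-1}$. I would establish divisibility by an orbit-counting argument on associate classes of atoms that mirrors the count in (c): pass to the $\mm$-adic completion $\widehat R$ and apply the Anderson--Mott description to embed $\widehat R$ into its integral closure $V$, a semilocal complete PID whose residue field extension over $\F_q$ has degree $d$, with $\widehat R = \F_q + I$ for the conductor ideal $I \subseteq V$. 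Stratifying atoms by their $V$-valuation and letting the multiplicative group $\F_{q^d}^\times \subseteq V^\times$ act on atoms by multiplication yields, on each stratum, a free $\F_{q^d}^\times / \F_q^\times$-action on associate classes, so each stratum contributes a multiple of $\frac{q^d-1}{q-1}$. The main obstacle will be verifying that this action really is free on associate classes --- i.e., that no class has a nontrivial stabilizer --- and this is exactly the point where the primefree hypothesis (equivalently $d \geq 2$) enters decisively, since a nontrivial stabilizer would correspond to a DVR direction in $\mm/\mm^2$, which Theorem \ref{02.2}(c) forbids.
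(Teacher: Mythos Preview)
The paper gives no proof of Theorem \ref{1.4}: all three parts are stated with citations to \cite{Cohen-Kaplansky46} and \cite{Anderson-Mott92} and left at that. So for parts (a) and (c) you are supplying a proof where the paper supplies none, and your argument for (c) (hence (a)) is essentially correct: the identification of atoms with elements of $V$-valuation in $[e,2e-1]$ is right, and the associate-class count is exactly $|V^\times/R^\times| = \frac{(q^d-1)q^{d(e-1)}}{q-1}$ per valuation level.

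Your argument for (b), however, has a genuine gap. You set $d := \dim_{\F_q}\mm/\mm^2$ and then assert that the normalization $V$ has residue field of degree $d$ over $\F_q$, so that $\F_{q^d}^\times \subseteq V^\times$. This identification of the embedding dimension with the residue degree of the normalization is exactly what Theorem \ref{02.8}(c) proves \emph{under} the $\mm^2$-universal hypothesis --- equivalently, under the hypothesis that $R$ already has $\frac{q^d-1}{q-1}$ atoms --- and it fails otherwise. Concretely, take $R = \F_2 + t^2\F_4[[t]]$ from part (c): here $q=2$, $\mm = t^2\F_4[[t]]$, $\mm^2 = t^4\F_4[[t]]$, so $d = \dim_{\F_2}\mm/\mm^2 = 4$, while the residue field of $V=\F_4[[t]]$ has degree $D=2$. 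Your divisibility claim would give $\frac{2^4-1}{2-1}=15 \mid n$, but $n=24$; so the orbit argument cannot be correct as stated. (Primality of $n$ is only invoked at the end, so the orbit step must hold for all primefree local CK domains, and this example shows it does not.)

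There is a second, related problem: even using the correct residue degree $D$, multiplication by $\F_{q^D}^\times \subseteq V^\times$ does not preserve $R$ in general, since $R$ is the composite of $V$ and a \emph{subring} $S \subseteq V/(R{:}V)$, and $S$ need not be an $\F_{q^D}$-submodule. So the action on atoms is not well-defined, and the freeness discussion never gets off the ground. The actual Cohen--Kaplansky argument (their Corollary on p.~475) proceeds differently; if you want to reconstruct it, the relevant structural input is closer to Theorem \ref{02.8}(b) than to a bare orbit count.
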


\begin{remark}
\label{1.5}
As mentioned above, Theorem \ref{1.4} implies there are primefree \CK$(n,1)$-domains for all $3 \leq n \leq 10$ and there is \emph{not} a primefree \CK$(11,1)$-domain.
\end{remark}
\noindent
We say a ring $R$ has \textbf{finite residue fields} if $R/\mm$ is finite for all $\mm \in \MaxSpec R$.  For a domain $R$ with fraction field $K$, and $I,J$ two $R$-submodules of $K$, we define \[ (I:J) = \{x \in K \mid x J \subset I \}. \]

\begin{thm}
\label{02.8}
Let $(R,\mm)$ be a local Cohen-Kaplansky domain with residue field $k = R/\mm \cong \F_q$, and put $d = \dim_k \mm/\mm^2$.  Let $\overline{R}$ be the normalization of $R$.
\begin{enumerate}[noitemsep,topsep=0pt,label={\alph*}{\rm)},font=\rm]
\item The ring $\overline{R}$ is a DVR, $\overline{R}$ is finitely generated as an $R$-module, and the ring $\overline{R}/(R:\overline{R})$ is finite. \\  Let $\overline{\mm}$ be the maximal ideal of $\overline{R}$ and put $\overline{k} = \overline{R}/\overline{\mm} \cong \F_{q^D}$. Let $\overline{r}\colon \overline{R} \ra \overline{k}$ be the quotient map.
\item The following are equivalent:
\begin{enumerate}[noitemsep,topsep=0pt,label=(\roman*),font=\rm]
\item The ideal $\mm^2$ is universal: every element of $\mm^2$ is divisible by every atom of $R$.
\item $R$ has $\frac{q^d-1}{q-1}$ atoms.
\item We have $R = \overline{r}^{-1}(\F_q)$.
\item We have $\overline{\mm} = \mm = (R:\overline{R})$.
\end{enumerate}
\item Under the equivalent conditions of part b), we have $d = D$.
\end{enumerate}
\end{thm}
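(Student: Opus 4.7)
Proof plan. For part (a), I would appeal to the classical structure theory of Cohen-Kaplansky \cite{Cohen-Kaplansky46} and Anderson-Mott \cite{Anderson-Mott92}: since $R$ is a one-dimensional Noetherian local domain (Theorem \ref{02.1}) that is Cohen-Kaplansky, it is analytically irreducible, $\overline{R}$ is finitely generated as an $R$-module, and is thus a one-dimensional local normal Noetherian ring, i.e., a DVR. The conductor $(R:\overline{R})$ is then a nonzero ideal of $\overline{R}$, hence contains some power $\overline{\mm}^n$; combined with the finiteness of $\overline{R}/\overline{\mm}$ as an $\F_q$-vector space (which follows because $\overline{R}$ is $R$-finite), this shows $\overline{R}/(R:\overline{R})$ is finite.

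For part (b), I begin from the observation that no atom of $R$ lies in $\mm^2$, so Lemma \ref{02.3}(a) identifies atoms with $\mm \setminus \mm^2$. By Lemma \ref{02.3}(b) associate atoms share the same line in $\mm/\mm^2$. A direct computation shows that two atoms $p$ and $p + z$ with $z \in \mm^2$ are $R$-associate iff $p \mid z$ in $R$; hence the fiber of associate classes above a single line through $p$ has size $|\mm^2/p\mm|$. The total atom count is therefore $\tfrac{q^d-1}{q-1} \cdot |\mm^2/p\mm|$, which equals $\tfrac{q^d-1}{q-1}$ iff $p\mm = \mm^2$ for every atom $p$ --- the common content of (i) and (ii). This settles (i) $\iff$ (ii).

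The heart of the proof is (i) $\Rightarrow$ (iv), which I would carry out in several linked stages. Comparing valuations across $p\mm = \mm^2$ forces every atom to satisfy $v(p) = e := \min\{v(x) : x \in \mm, x \neq 0\}$, so $v(R \setminus \{0\}) \subset e\Z_{\geq 0}$ (every nonzero element factors as a unit times atoms); but part (a) supplies all sufficiently large integers to $v(R \setminus \{0\})$ via the conductor, forcing $e = 1$. By induction on $n$ using (i), I would then prove $\mm^n = R \cap \overline{\mm}^n$ for all $n \geq 1$: given this at level $n$, any $x \in R \cap \overline{\mm}^{n+1}$ lies in $\mm^n = p^{n-1}\mm$ (with $\mm^n = p^{n-1}\mm$ an immediate consequence of (i)), so $x = p^{n-1} m$ with $m \in R \cap \overline{\mm}^2 = \mm^2 = p\mm$, placing $x$ in $p^n \mm = \mm^{n+1}$. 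Applied at the conductor exponent $c$, this yields $\mm^c = \overline{\mm}^c$. To finish $c = 1$: suppose $c \geq 2$ and pick $x \in \overline{\mm}^{c-1} \setminus R$ (nonempty by minimality of $c$); for any atom $p$, $px \in \overline{\mm}^c = \mm^c \subset \mm^2$, so (i) forces $p \mid px$ in $R$, yielding $x = (px)/p \in R$ --- a contradiction. Hence $c = 1$, $\overline{\mm} \subset R$, and $\mm = \overline{\mm} = (R:\overline{R})$, which is (iv).

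The remaining arcs are quick. Under (iv), $\overline{\mm} \subset R$ with $R/\overline{\mm} = R/\mm = \F_q$ identifies $R = \overline{r}^{-1}(\F_q)$, giving (iii); and (iii) implies (i) since under (iii) atoms have valuation $1$, so for any $z \in \mm^2$ and atom $p$ the quotient $z/p$ has valuation $\geq 1$ and thus lies in $\overline{\mm} \subset R$, giving $p \mid z$ in $R$. Part (c) is then immediate: under (iv), $\mm = \overline{\mm}$, so $\mm/\mm^2 \cong \overline{\mm}/\overline{\mm}^2 \cong \overline{k}$ as $\F_q$-vector spaces, whence $d = D$. The main obstacle I expect is the chain (i) $\Rightarrow$ ``$v(p) = 1$ for every atom'' $\Rightarrow$ ``$\mm^n = R \cap \overline{\mm}^n$ for all $n$'' $\Rightarrow$ ``$c = 1$'': this is what bridges the internal divisibility condition (i) with the extrinsic condition (iv) involving the conductor and the normalization.
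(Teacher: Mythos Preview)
Your argument is essentially sound, but it diverges substantially from the paper's treatment of part~(b): the paper simply cites \cite[\S5]{Anderson-Mott92} for the entire four-way equivalence, whereas you supply a self-contained proof.  Parts~(a) and~(c) of your plan match the paper's argument closely (Krull--Akizuki plus nonzero conductor for (a); the cardinality count $[\mm:\mm^2] = [\overline{\mm}:\overline{\mm}^2] = q^D$ for (c)).  Your approach to (b) is thus genuinely different and more informative, trading brevity for an explicit mechanism linking the divisibility condition (i) to the conductor condition (iv) via the valuation.

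That said, a few steps in your (b) need tightening.  First, the blanket claim ``no atom of $R$ lies in $\mm^2$'' is not a general fact about CK domains; you only need it under (i) or (ii), and in each case it follows easily (under (i) an atom in $\mm^2$ would be divisible by every atom, forcing a single associate class and hence $\mm$ principal, a contradiction; under (ii) any atom in $\mm^2$ would be an extra associate class beyond the $\frac{q^d-1}{q-1}$ coming from lines in $\mm/\mm^2$).  Second, the fiber-size formula $|\mm^2/p\mm|$ is not justified in general: the $(1+\mm)$-orbit of $p+z$ in $p+\mm^2$ is $p+z+(p+z)\mm$, and the ideal $(p+z)\mm$ may vary with $z$.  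What you actually need is only the two extremes: under (i) every atom $p'$ satisfies $p'\mm=\mm^2$, so each line carries a single associate class; conversely, if some line carries a single class then $\mm^2\subset p\mm$ for any atom $p$ on it.  Third, your induction for $\mm^n=R\cap\overline{\mm}^n$ invokes the case $n=2$ inside the inductive step; this is easily repaired, since once you know every atom has valuation~$1$, any $x\in R$ with $v(x)\geq n$ factors as a product of $\geq n$ atoms and hence lies in $\mm^n$ directly, making the induction unnecessary.  With these adjustments your chain (i)$\Rightarrow$(iv)$\Rightarrow$(iii)$\Rightarrow$(i), together with (i)$\Leftrightarrow$(ii), goes through cleanly.
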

\begin{proof}
a) The ring $\overline{R}$ is a DVR by the Krull-Akizuki Theorem.  By \cite[Thm. 2.4]{Anderson-Mott92}, $(R:\overline{R}) \supsetneq (0)$.  Thus
$\overline{R}/\overline{\mm}$ is finitely generated as a module over $R/\mm \cong \F_q$, hence is a finite field, say $\overline{R}/\overline{\mm} \cong \F_{q^D}$.  The \textbf{conductor} $(R:\overline{R})$ is the largest ideal of $R$ which is also an ideal of $\overline{R}$; since it is a nonzero ideal of a DVR with finite residue field, the ring $\overline{R}/(R:\overline{R})$ is finite. 
b) See
\cite[$\S$5]{Anderson-Mott92}. c) Since $\overline{R}$ is a DVR, we have $1 = \dim_{\F_{q^D}} \overline{\mm}/\overline{\mm}^2$, so $[\overline{\mm}:\overline{\mm}^2] = q^D$.
But $\mm = \overline{\mm}$, so \[[\mm:\mm^2] = [\overline{\mm}:\overline{\mm}^2] = q^D \] and \[ d = \dim_{R/\mm} \mm/\mm^2 = \dim_{\F_q} \mm/\mm^2 = \log_q [\mm:\mm^2] = D. \qedhere \]
\end{proof}
\noindent
Let $R$ be a ring, let $I$ be an ideal of $R$, let $q\colon R \ra R/I$ be the quotient map, and let $S$ be a subring of $R/I$.  Following Anderson-Mott \cite{Anderson-Mott92}, we call $q^{-1}(S)$ the \textbf{composite} of $R$ and $S$ over $I$.  Thus the condition
$R = \overline{r}^{-1}(\F_q)$ in Theorem \ref{02.8}b) above is that $R$ is the composite of $\overline{R}$ and $\F_q$
over $\overline{\mm}$.  On the other hand, Anderson-Mott characterize all Cohen-Kaplansky domains with finite residue fields (thus all primefree Cohen-Kaplansky domains) in terms of composites, as follows.

\begin{thm}\mbox{ }
\label{02.6}
\begin{enumerate}[noitemsep,topsep=0pt,label={\alph*}{\rm)},font=\rm]
\item Let $D$ be a semilocal PID with finite residue fields and maximal ideals $\MM_1,\ldots,\MM_m$.  Let $I = \MM_1^{e_1} \cdots \MM_m^{e_m}$ be an ideal of $D$, so $D/I \cong \prod_{j=1}^m D/\MM_j^{e_j}$.  For $1 \leq j \leq m$ let $S_j$ be a subring of $D/\mm_j^{e_j}$, and put $S = \prod_{j=1}^m S_j \subset D/I$.  Let $R$ be the composite of $D$ and $S$ over $I$. Then $R$ is a Cohen-Kaplansky domain with normalization $D$ and such that $(R:D) \supset I$.  Moreover $R$ has precisely $m$ maximal ideals, namely $\mm_j = \MM_j \cap R$ for $1 \leq j \leq m$.
\item Let $R$ be a Cohen-Kaplansky domain with finite residue fields, with normalization $\overline{R}$.  Then $\overline{R}$ is a semilocal PID with maximal ideals $\MM_1,\ldots,\MM_m$, and each $\overline{R}/\MM_j$ is finite. The conductor ideal $(R:\overline{R})$ is nonzero, so may be written as $\MM_1^{e_1} \cdots \MM_m^{e_m}$, and the subring $S = R/(R:\overline{R})$ of
$\overline{R}/(R:\overline{R}) \cong \prod_{j=1}^m \overline{R}/\MM_j^{e_j}$ may be decomposed as $\prod_{j=1}^m S_j$ with each $S_j$ a subring of $\overline{R}/\MM_j^{e_j}$.  $R$ is the composite of $\overline{R}$ and $S$ over $(R:\overline{R})$.
\end{enumerate}
\end{thm}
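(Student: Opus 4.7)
The plan is to read part (a) as a direct construction and part (b) as its inverse decomposition, reducing in both directions to the local theory via Theorem \ref{02.2}. The conductor ideal $(R:\overline{R})$ is the central object: in (a) it contains $I$ by construction, and in (b) its nonvanishing (from Theorem \ref{02.8}(a)) drives the analysis.

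For part (a), set $R := q^{-1}(S)$ with $q\colon D \twoheadrightarrow D/I$. Each factor $D/\MM_j^{e_j} \cong D_{\MM_j}/\MM_j^{e_j} D_{\MM_j}$ is finite, since $D_{\MM_j}$ is a DVR with finite residue field. Hence $D/I$ is finite, so $D$ is module-finite over $R$; integrality of $R \subset D$ together with $D$ being a PID (hence normal) and the inclusion being birational identifies $D$ with $\overline{R}$. The relation $ID = I \subset R$ gives $I \subset (R:D)$. By going-up, $\MaxSpec R = \{\MM_j \cap R : 1 \leq j \leq m\}$; these $m$ contractions are pairwise distinct because, after dividing by $I$, they correspond to the maximal ideals of the finite subrings $S_j \subset D/\MM_j^{e_j}$ (each $S_j$ is local, as its idempotents are those of the ambient local ring and a finite ring with only trivial idempotents is local). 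To confirm the Cohen-Kaplansky property, Theorem \ref{02.2}(a) reduces us to checking that each $R_{\mm_j}$ is a CK domain; but $R_{\mm_j}$ is itself the composite of the DVR $D_{\MM_j}$ with $S_j$ over $\MM_j^{e_j} D_{\MM_j}$, and this local case is the classical structure result of Cohen-Kaplansky \cite{Cohen-Kaplansky46}.

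For part (b), Theorem \ref{02.1} gives that $R$ is one-dimensional, Noetherian, and semilocal with maximal ideals $\mm_1, \ldots, \mm_m$. By Theorem \ref{02.8}(a) each $\overline{R_{\mm_j}}$ is a DVR, $\overline{R}$ is module-finite over $R$, and $(R:\overline{R}) \neq (0)$. Since integral closure commutes with localization, $\overline{R}$ is a semilocal Dedekind domain---hence a PID---with exactly $m$ maximal ideals $\MM_j$ satisfying $\MM_j \cap R = \mm_j$, and each $\overline{R}/\MM_j$ is a finite extension of $R/\mm_j$, hence finite. Writing $(R:\overline{R}) = \MM_1^{e_1} \cdots \MM_m^{e_m}$, CRT gives $\overline{R}/(R:\overline{R}) \cong \prod_j \overline{R}/\MM_j^{e_j}$. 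The subring $R/(R:\overline{R})$ is itself Artinian, with $m$ maximal ideals $\mm_j/(R:\overline{R})$, so it too splits via its primitive idempotents as $\prod_j S_j$. The inclusion is factorwise: each primitive idempotent $\varepsilon_j \in R/(R:\overline{R})$ is also an idempotent in $\overline{R}/(R:\overline{R})$; the relation $\mm_k = \MM_k \cap R$ shows $\varepsilon_j \notin \MM_j/(R:\overline{R})$ and $\varepsilon_j \in \MM_k/(R:\overline{R})$ for $k \neq j$, so by uniqueness $\varepsilon_j$ coincides with the $j$-th primitive idempotent of $\overline{R}/(R:\overline{R})$. This places $S_j$ inside $\overline{R}/\MM_j^{e_j}$, and the composite identity for $R$ is now definitional.

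I expect the main obstacle to be supplying the local content invoked at the end of part (a): that a one-dimensional local Noetherian domain with finite residue field whose normalization is a DVR with finite residue field and nonzero conductor has only finitely many atoms up to associates. This is the classical local half of the Cohen-Kaplansky theorem \cite{Cohen-Kaplansky46} and is the only genuinely non-trivial input; the remainder is bookkeeping with conductors, CRT, and idempotents.
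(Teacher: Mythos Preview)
The paper does not prove this theorem at all: its proof reads, in full, ``This is a rewording of \cite[Thm.\ 4.4]{Anderson-Mott92} suitable for our purposes.'' Your proposal instead supplies an actual argument, and it is essentially the standard one---conductor bookkeeping, CRT, and reduction to the local Cohen--Kaplansky theory---so in content you are reconstructing what Anderson--Mott do rather than taking a genuinely different route.

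Two points deserve tightening. In part (a) you assert that $R_{\mm_j}$ is the composite of $D_{\MM_j}$ and $S_j$ over $\MM_j^{e_j}D_{\MM_j}$; this is exactly the localization statement the paper isolates as Corollary~\ref{02.7}(b), proved there \emph{using} Theorem~\ref{02.6}, so you should either prove it directly (it is not hard: identify $R/I \cong S$ and localize the short exact sequence $0 \to R \to D \to (D/I)/S \to 0$) or note explicitly that you are invoking the local composite description from \cite{Anderson-Mott92}. In part (b) you cite Theorem~\ref{02.8}(a) for the global claims that $\overline{R}$ is module-finite over $R$ and $(R:\overline{R}) \neq 0$, but Theorem~\ref{02.8}(a) is stated only for \emph{local} CK domains; the global statements are \cite[Thm.\ 2.4]{Anderson-Mott92} (which is what the paper cites inside the proof of \ref{02.8}(a)), and you should cite that directly rather than route through the local version. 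Neither issue is fatal, and your idempotent argument for the product decomposition of $S$ is a clean alternative to the root-extension argument the paper gives in the Remark following the theorem.
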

\begin{proof}
This is a rewording of \cite[Thm. 4.4]{Anderson-Mott92} suitable for our purposes.
\end{proof}

\begin{remark}
For a Cohen-Kaplansky domain $R$, its normalization $\overline{R}$ is a root extension: for all $r \in \overline{R}$ there is an $n \in \Z^+$ such that $r^n \in R$ \cite[Lemma 4.1]{Anderson-Mott92}.  Thus $S = R/(R:\overline{R}) \subset \overline{R}/(R:\overline{R})$ is also a root extension.  If $(R:\overline{R}) = \MM_1^{e_1} \cdots \MM_m^{e_m}$ then by the Chinese Remainder Theorem $\overline{R}/(R:\overline{R})$ decomposes as a product of $m$ finite local rings, with corresponding idempotents $\epsilon_1,\ldots,\epsilon_m$.  Since $\epsilon_j^n = \epsilon_j$ for all $n$, it follows that each $\epsilon_j$ lies in $S$.  The $S_j$ in Theorem \ref{02.6}b) is the projection $ S\epsilon_j$.
\end{remark}

\begin{cor}
\label{02.7}
Let $R$ be a Cohen-Kaplansky domain with finite residue fields which is the composite of $\overline{R}$ and $S = \prod_{j=1}^m S_j$ over $(R:\overline{R}) = \MM_1^{e_1} \cdots \MM_j^{e_j}$.  Then:
\begin{enumerate}[noitemsep,topsep=0pt,label={\alph*}{\rm)},font=\rm]
\item  $R$ has precisely $m$ maximal ideals,
$\mm_j = \MM_j \cap R$.
\item The localization $R_{\mm_j}$ is the composite of $\overline{R}_{\MM_j}$ and $S_j$ over $(R_{\mm_j}:\overline{R}_{\MM_j}) =\MM_j^{e_j}\overline{R}_{\MM_j}$.
     The completion
$\widehat{R_{\mm_j}}$ is the composite of $\widehat{\overline{R}_{\MM_j}}$ and $S_j$ over
$(\widehat{R_{\mm_j}}:\widehat{\overline{R}_{\MM_j}}) = \MM_j^{e_j} \widehat{\overline{R_{\MM_j}}} = \widehat{\MM_j}^{e_j}$.\footnote{Here we have used the canonical ring isomorphisms
     $\overline{R}/\MM_j^{e_j} \stackrel{\sim}{\ra} \overline{R}_{\MM_j}/\MM_j^{e_j} \overline{R}_{\MM_j} \stackrel{\sim}{\ra} \widehat{\overline{R}_{\MM_j}}/\MM_j^{e_j} \widehat{\overline{R_{\MM_j}}}$ to regard $S_j$ as a subring of the latter two rings.}
\item For $1 \leq j \leq m$, let $R_j$ be the composite of $\overline{R}$ and $S_j$ over $\MM_j^{e_j}$.  Then $R_j$ is a local Cohen-Kaplansky domain, $R = \bigcap_{j=1}^m R_j$ and $(R_j)_{\MM_j \cap R_j} = R_{\MM_j \cap R}$.
\end{enumerate}
\end{cor}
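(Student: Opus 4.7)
Part (a) is a restatement of the last sentence of Theorem \ref{02.6}(a) and requires no further argument.

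The plan for (b) is to show that localization at $\mm_j$ commutes with the pullback presentation $R = q^{-1}(S)$, where $q\colon \overline{R} \to \overline{R}/(R:\overline{R})$. Two ingredients are needed. First, since $\MM_i \overline{R}_{\MM_j} = \overline{R}_{\MM_j}$ for $i \neq j$, the extended conductor equals
$(R:\overline{R})\overline{R}_{\MM_j} = \MM_j^{e_j}\overline{R}_{\MM_j}$, giving the purported conductor of the localized composite. Second, by the Remark following Theorem \ref{02.6}, the orthogonal idempotents $\epsilon_1,\ldots,\epsilon_m$ of $\overline{R}/(R:\overline{R})$ all lie in $S$, hence in $q(R)$. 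After localizing at $\mm_j$, the idempotent $\epsilon_j$ becomes $1$ while the others become $0$, so the image of $R_{\mm_j}$ in the local quotient $\overline{R}_{\MM_j}/\MM_j^{e_j}\overline{R}_{\MM_j}$ is exactly the $j$-th projection $S\epsilon_j = S_j$. Combining these identifications realizes $R_{\mm_j}$ as the stated composite. For the completion assertion, I would complete both sides in the $\mm_j$-adic topology; the conductor quotient is already finite, so the canonical isomorphisms of the footnote transport $S_j$ unchanged and give the stated description of $\widehat{R_{\mm_j}}$.

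For (c), part (b) identifies $R_j$ (interpreted via the composite over the extended conductor $\MM_j^{e_j}\overline{R}_{\MM_j}$) with $R_{\mm_j}$, which is a local Cohen-Kaplansky domain; the equality $(R_j)_{\MM_j \cap R_j} = R_{\mm_j}$ is then immediate. The intersection formula $R = \bigcap_{j=1}^m R_j$ reads directly off the pullback description: an element $r \in \overline{R}$ lies in $R$ iff its image modulo $(R:\overline{R}) = \prod_i \MM_i^{e_i}$ lies in $\prod_i S_i$, iff its image modulo each individual $\MM_j^{e_j}$ lies in $S_j$, iff $r$ lies in $R_j$ for every $j$. The main technical obstacle is the idempotent argument in (b) used to pin down the image of $R$ in the $j$-th local quotient as exactly $S_j$ rather than some larger subring; once that is secured, both the localization/completion statements in (b) and the reassembly in (c) follow by routine pullback manipulations.
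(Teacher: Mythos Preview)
Your handling of (a) and (b) is correct. For (b) the paper takes a slightly different route: instead of tracking idempotents, it writes $R_{\mm_J}/(R_{\mm_J}:\overline{R}_{\MM_J})$ as $\bigl(\prod_j S_j\bigr)\otimes_R R_{\mm_J}$ and then observes that the induced localization map $S_J \to S_J\otimes_R R_{\mm_J}$ is an isomorphism because any injective localization of a local Artinian ring is an isomorphism. Your idempotent argument reaches the same conclusion and is arguably cleaner.

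In (c) there is a genuine gap. The ring $R_j$ is \emph{defined} in the statement as the composite of $\overline{R}$ and $S_j$ over $\MM_j^{e_j}$; this is a subring of $\overline{R}$, not of $\overline{R}_{\MM_j}$. Your parenthetical ``interpreted via the composite over the extended conductor $\MM_j^{e_j}\overline{R}_{\MM_j}$'' silently replaces $R_j$ by a different ring---namely $R_{\mm_j}$ itself, by part (b)---after which the equality $(R_j)_{\MM_j\cap R_j} = R_{\mm_j}$ is vacuous. Notice that your own intersection argument (which is correct and matches the paper's) uses the \emph{intended} definition of $R_j$ as a subring of $\overline{R}$, so your proof of (c) is internally inconsistent on what $R_j$ denotes.

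What is missing is an application of part (b) to $R_j$ rather than to $R$. The paper first notes (via Theorem~\ref{02.6}) that $R_j$ is itself a Cohen--Kaplansky domain with finite residue fields and normalization $\overline{R}$, then applies part (b) to $R_j$ to exhibit $(R_j)_{\MM_j\cap R_j}$ as the composite of $\overline{R}_{\MM_j}$ and $S_j$ over $\MM_j^{e_j}\overline{R}_{\MM_j}$; since part (b) applied to $R$ gives the same description of $R_{\mm_j}$, the two localizations coincide. Your sketch skips this step entirely.
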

\begin{proof}
a) This is a property of Cohen-Kaplansky domains \cite[Thm. 2.4]{Anderson-Mott92}.  \\
b) Fix $1 \leq J \leq m$.  By Theorems \ref{02.2} and \ref{02.6}b),
the ring $R_{\mm_J}$ is a local Cohen-Kaplansky domain and is the composite of $\overline{R}_{\MM_J} = \overline{R_{\mm_J}} = \overline{R} \otimes_R R_{\mm_J}$ and
\[R_{\mm_J}/(R_{\mm_J}:\overline{R}_{\MM_J}) = R_{\mm_J}/(R_{\mm_J}:\overline{R} \otimes_R R_{\mm_J}) = R/(R:\overline{R}) \otimes_R R_{\mm_J} =  \bigg(\prod_{j=1}^m S_j\bigg) \otimes_R R_{\mm_J}\]  over $\MM_J^{e_J} \overline{R}_{\MM_J} = (R:\overline{R}) R_{\mm_J} = (R_{\mm_J}:\overline{R}_{\MM_J})$.  Consider the localization map
\[\varphi\colon \prod_{j=1}^m S_j \ra \bigg(\prod_{j=1}^m S_j\bigg) \otimes_R R_{\mm_J}, \]
which is the restriction of the localization map
\[\overline{\varphi}\colon \prod_{j=1}^m \overline{R}/\MM_j^{e_j} \ra
\bigg(\prod_{j=1}^m \overline{R}/\MM_j^{e_j}\bigg) \otimes_R R_{\mm_J}. \]
Because
\[
\bigg(\prod_{j=1}^m \overline{R}/\MM_j^{e_j}\bigg) \otimes_R R_{\mm_J}  = \prod_{j=1}^m \overline{R}_{\MM_j}/(\MM_j \overline{R}_{\MM_J})^{e_J} =
\overline{R}_{\MM_J}/(\MM_J \overline{R}_{\MM_J})^{e_J} \]
we find that $\overline{\varphi}$ factors through the projection $\prod_{j=1}^m \overline{R}/\MM_j^{e_j} \ra \overline{R}/\MM_J^{e_J}$ to
\[ \overline{\iota}\colon \overline{R}/\MM_j^{e_j} \stackrel{\sim}{\ra} \overline{R}_{\MM_J}/(\MM_J \overline{R}_{\MM_J})^{e_J}. \]
Thus $\varphi$ factors through the projection $\prod_{j=1}^m S_j \ra S_J$ to give an injection
\[ \iota\colon S_J \ra \bigg(\prod_{j=1}^m S_j\bigg) \otimes_R R_{\mm_J} = S_J \otimes_R R_{\mm_J}. \]
Thus $\iota$ is an injective localization map on the local Artinian $R$-algebra $S_J$.  Since every element of a local Artinian ring is either a nilpotent or a unit, any nonzero localization map on a local Artinian
ring is an isomorphism, so $\iota$ identifies $S_J$ with $(\prod_{j=1}^m S_j) \otimes_R R_{\mm_J}$.  The case of the completion is similar but easier. \\
c) The subring $\bigcap_{j=1}^m R_j$ is the set of elements $x \in \overline{R}$ such that for all $1 \leq j \leq m$ we have
$x \pmod {\MM_j^{e_j}} \in S_j$: manifestly, this is $R$.  By Theorem \ref{02.6}b), $R_j$ is a local Cohen-Kaplansky domain.  Finally, by part b) each of $(R_j)_{\MM_j \cap R_j}$ and $R_{\MM_j \cap R}$ is the composite of $\overline{R}_{\MM_j}$ and $S_j$ over
$\overline{R}/\MM_j^{e_j}$, so they are equal.
\end{proof}





\section{The Proofs}
\subsection{Proofs of Theorems \ref{MAINTHM} and \ref{PAULTHM}}

By Remark \ref{1.5} there are primefree $\CKn$-domains for $n \in [3,5]$, and by Theorem \ref{MAINCSCOR} if $p_1 < \ldots < p_m$ are primes there is a primefree CK$(\sum_{j=1}^m (p_j + 1),m)$-domain.  Thus to prove Theorem \ref{MAINTHM}, that primefree $\CKn$-domains exist for all $n \geq 3$ it is enough to prove Theorem \ref{PAULTHM}, that every $n \geq 6$ is of the form $\sum_{j=1}^m (p_j+1)$ for primes $p_1 < \ldots < p_m$ (Theorem \ref{PAULTHM}).

\textbf{Step 1:} We show that for all $n \geq 18$, there is a prime in the interval $(\frac{n}{2}-1,n-7]$.  The values $18 \leq n \leq 51$ can easily be checked by hand, so we may suppose $n \geq 52$.  We will make use of a sharpening of Bertrand's postulate due to Nagura \cite{Nagura52}: for all $x \geq 25$, there is a prime in the interval $(x,\frac{6x}{5}]$.  Applying this with $x = \frac{n}{2}-1$ we find that there is a prime number in the interval $(\frac{n}{2}-1,\frac{3n}{5}-\frac{6}{5}]$ hence also in the larger interval $(\frac{n}{2}-1,n-7]$.

\textbf{Step 2:} It suffices to show that for all $j \geq 0$, every $n \in [6,17 \cdot 2^j]$ is a sum of distinct $p_j+1$'s.  We show this by induction on $j$.  The base case $j = 0$ is an easy computation.  Suppose the result holds for some $j \geq 0$, and let
$n \in (17 \cdot 2^j,17 \cdot 2^{j+1}]$.  By Step 1, there is a prime number $p \in (\frac{n}{2}-1,n-7]$, so
\[ 6 \leq n - (p+1) < \frac{n}{2} \leq 17 \cdot 2^j. \]
By induction there are primes $p_1 < \ldots < p_m$ such that $n - (p+1) = \sum_{j=1}^m (p_j + 1)$, and thus
$n = \sum_{j=1}^m (p_j + 1) + (p+1)$.  Since $p+1 > \frac{n}{2}$, we have $p_j + 1 < \frac{n}{2}$ for all $j$, so $p > p_m$ and we have written $n$ as a sum of distinct $p_j+1$'s.

\begin{remark}
\label{03.1}
Theorem \ref{PAULTHM} is a variant of a result of H.-E. Richert \cite{Richert49}, who used Bertrand's postulate to show that every
$n \geq 7$ is the sum of distinct primes.
\end{remark}

\subsection{An Algebra Globalization Theorem}

\begin{thm}
\label{3.2} \label{03.2}
Let $q_1,\ldots,q_m$ be prime powers.
\begin{enumerate}[noitemsep,topsep=0pt,label={\alph*}{\rm)},font=\rm]
\item There is a number field $L$ and a sequence of distinct maximal ideals $\mm_1,\ldots,\mm_m$ of the ring of integers $\Z_L$ of $L$
such that for all $1 \leq j \leq m$ the quotient $\Z_L/\mm_j$ is a finite field of order $q_j$.
\item If there is a prime power $q$ and $b_1,\ldots,b_m$ such that $q_j = q^{b_j}$ for all $1 \leq j \leq m$, there is a finite degree field extension $L/\F_q(t)$ and a sequence of distinct maximal
ideals $\mm_1,\ldots,\mm_m$ of the integral closure $S$ of $\F_q[t]$ in $L$ such that for all $1 \leq j \leq m$ the quotient $S/\mm_j$ is a finite field of order $q_j$. \\
\end{enumerate}
\end{thm}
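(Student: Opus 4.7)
My plan is a local-to-global construction of $L$ as a primitive extension, using weak approximation and Krasner's lemma. I outline part (a); part (b) is handled identically in structure, replacing $(\Z,\Q)$ by $(\F_q[t],\F_q(t))$ and completions at rational primes by completions at monic irreducibles of $\F_q[t]$.

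Write $q_j = p_j^{a_j}$. For each $j$, I choose a monic polynomial $g_j(x) \in \Z_{p_j}[x]$ of degree $a_j$ defining the unramified extension of $\Q_{p_j}$ of degree $a_j$---any lift of a monic irreducible of degree $a_j$ over $\F_{p_j}$ will do. When two distinct indices share $(p_i,a_i) = (p_j,a_j)$, in particular whenever $q_i = q_j$, I insist that $g_i \neq g_j$ as polynomials in $\Z_{p_i}[x]$; this is always possible, since the set of monic degree-$a$ lifts of a fixed mod-$p$ irreducible is an infinite coset of $p \cdot \Z_p[x]_{<a}$, all of whose members define the same unramified extension.

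By weak approximation on the affine space of monic polynomials of degree $N := \sum_j a_j$, I produce $f(x) \in \Z[x]$ whose coefficients are $p_j$-adically so close to those of $\prod_{i:\,p_i=p_j} g_i(x) \cdot r_{p_j}(x)$ (where $r_{p_j}$ is a fixed dummy polynomial bringing the total degree to $N$) that Krasner's lemma realizes each $g_i$ as a distinct monic irreducible factor of $f$ in $\Q_{p_j}[x]$. To guarantee $f$ is irreducible over $\Q$, I also impose an Eisenstein condition at an auxiliary prime $\ell \notin \{p_1,\ldots,p_m\}$, which is compatible with the other local constraints by the Chinese remainder theorem. Setting $L := \Q[x]/(f)$, the decomposition $L \otimes_\Q \Q_{p_j} \cong \prod_{\mathfrak{P}\mid p_j} L_\mathfrak{P}$ puts the monic irreducible factors of $f$ in $\Q_{p_j}[x]$ in bijection with the primes of $\Z_L$ above $p_j$, preserving residue fields. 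Each $g_j$ therefore delivers a distinct $\mm_j \subset \Z_L$ with $\Z_L/\mm_j \cong \F_{q_j}$.

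The main subtlety I anticipate is the coincidence case $q_i = q_j$ with $i \neq j$, demanding two distinct primes of $\Z_L$ over the same rational prime $p$ with the same residue field $\F_{q_i}$. A naive monogenic construction based purely on the mod-$p$ factorization of a defining polynomial cannot handle this when $\F_p$ admits fewer irreducibles of the required degree than the multiplicity needs---for instance, $x^2+x+1$ is the unique degree-$2$ irreducible over $\F_2$, yet a primefree $\CK$-domain of characteristic $2$ might demand several primes with residue field $\F_4$. Working instead with $\Q_p$-irreducible factors of $f$, of which there is an infinite supply for any fixed residue field, bypasses this obstruction.
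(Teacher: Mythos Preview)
Your proposal is correct and follows essentially the same approach as the paper's proof: both construct $L$ as a primitive extension using weak approximation on coefficients together with Krasner's lemma, pad the local data to a common degree (your ``dummy polynomial'' $r_{p_j}$ plays the role of the paper's ``fudge field'' $M_i$), and secure global irreducibility via an auxiliary place where the local algebra is a field (your Eisenstein prime $\ell$ is a concrete instance of the paper's extra valuation $v_{g+1}$ with $L_{g+1}$ a field). The paper phrases Step~1 more abstractly in terms of \'etale algebras, while you work directly with factorizations of polynomials, but the content is the same; your explicit treatment of the coincidence case $q_i = q_j$ is a nice touch that the paper handles only implicitly.
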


\begin{remark} Theorems of this kind appear in the literature. For instance, part (a) is a special case of \cite[Satz 1]{Hasse26}. However, we prefer to give a self-contained argument with the number field and function field cases treated on equal footing.
\end{remark}

\begin{proof}[Proof of Theorem \ref{3.2}]\mbox{ }

\textbf{Step 1:} Let $K$ be a field, and let $v_1,\ldots,v_{g+1}$ be inequivalent discrete valuations on $K$.  For $1 \leq i \leq g+1$ let $\hat{K}_i$ denote the completion of $K$ at $v_i$.  Let $d \in \Z^+$.  For $1 \leq i \leq g+1$, let $L_i$ be an \'etale $\hat{K}_i$-algebra -- i.e., a finite product of finite degree separable field extensions of $\hat{K}_i$ -- with $\dim_{\hat{K}_i} L_i = d$ and such that $L_{g+1}$ is a field.  Then by Krasner's Lemma and weak approximation, there is a separable field extension $L/K$ of degree $d$ and for all $1 \leq i \leq g+1$ a $\hat{K}_i$-algebra isomorphism $K \otimes_F \hat{K}_i \stackrel{\sim}{\ra} L_i$.  (We may write each $A_i$ as $\hat{K}_i[t]/(f_i(t))$ for a separable polynomial $f_i \in \hat{K}_i$.  Then we may take $L = K[t]/(f(t))$ where for all $i$, the coefficients of $f$ are sufficiently close to those of $f_i$ in the $v_i$-adic topology.  Thus we get a separable $K$-algebra $L$.  The condition that $L_{g+1}$ is a field ensures that $L$ is a field.)

\textbf{Step 2:} Recall that for all $d \in \Z^+$ there is a $p$-adic field with residue field $\F_{p^d}$: we may take the unique
degree $d$ unramified extension of $\Q_p$.  Let $g \in \Z^+$ be such that $q_1,\ldots,q_{j_1}$ are all powers of a prime $p_1$, $q_{j_1+1},\ldots,q_{j_2}$ are all powers of a prime $p_2$, $\ldots$, and so forth, up to $q_{j_{g-1}+1},\ldots,q_{j_g}$ -- note $j_g = m$ -- all powers of $p_g$.  Put $j_0 = 0$. Let $K = \Q$, for $1 \leq i \leq g$ let $v_i = \ord_{p_i}$, and let
\[L_i = \left(\prod_{j=j_{i-1}+1}^{j_i} L_j \right) \times M_i\] be a finite product of $p$-adic fields such that the residue cardinality of the valuation ring of $L_j$ is $q_k$ and $M_i$ is a ``fudge field'' chosen so that there is $D \in \Z^+$ such that
$\dim_{\Q_i} L_i = D$ for all $1 \leq i \leq g$.  The field extension $L/K$ obtained by the construction of Step 1 is the desired number field in part a).

\textbf{Step 3:} Suppose we are in the case considered in part b). For all $d \in \Z^+$, $\F_{q^d}((t))$ is a finite extension of $\F_q((t))$ with residue field $\F_{q^d}$.
We proceed as in Step 2 but with $K = \F_q(t)$, $g = 1$, $v_1 = \ord_t$ and $v_2 = \ord_{t-1}$.
\end{proof}

\subsection{Proof of Theorem \ref{SAURABHTHM}}
a) For $1 \leq j \leq M$ we are given a primefree CK$(n_j,m_j;q)$-domain $R_j$.  For $1 \leq k \leq m_j$ let $n_{jk}$ be the number of atoms in the $k$th maximal ideal of $R_j$ (under some ordering).  By Theorem \ref{02.2} there are primefree local CK$(n_{jk},1;q)$-domains $R_{j,1}, \ldots, R_{j,m_j}$ such that
$\sum_{j,k} n_{jk} = \sum_{j=1}^M n_j$.  So we may assume without loss of generality that $m_j = 1$ for all $j$.
\\ \forceindent
Suppose first that $q = 0$, and for each $1 \leq j \leq M$ we are given a primefree CK$(n_j,1;0)$-domain $A_j$.  By \cite[Thm. 9]{Cohen-Kaplansky46}, the completion $\hat{A_j}$ of the local ring $A_j$ is also a CK$(n_j,1;0)$-domain, so it is
no loss of generality to assume that each $A_j$ is complete.  Let $\overline{A_j}$ be the integral closure of $A_j$, a complete DVR, say with maximal ideal $(\pi_j)$.  Let $(A_j:\overline{A_j}) = (\pi_j^{e_j})$.  By Theorem \ref{02.6}, $A_j$ is the composite of $\overline{A_j}$ and $S_j = A_j/(A_j:\overline{A_j}) = A_j/(\pi_j^{e_j})$ over $(A_j:\overline{A_j}) = (\pi_j^{e_j})$.
\\ \forceindent
Since $\overline{A_j}$ is a complete DVR of characteristic $0$
with finite residue field $\F_{p_j^{a_j}}$, its fraction field $L_j$ is a finite extension of $\Q_{p_j}$.  Arguing as in the proof of
Theorem \ref{03.2} there is a number field $L$ and a set of finite places $v_1,\ldots,v_M$ of $L$ such that for all $j$, the completion of $L$ at $v_j$ is isomorphic to $L_j$.  Let $\MM_1, \ldots, \MM_M$ be the corresponding maximal ideals of the ring of integers $\Z_L$ of $L$, and let $D$ be the localization of $\Z_L$ at $\Z_L \setminus \bigcup_{j=1}^M \MM_j$, so $D$ is a semilocal PID
with precisely $M$ maximal ideals $\MM_1,\ldots,\MM_M$.  Moreover, for $1 \leq j \leq M$ we may identify the completion of $D$ with
respect to $\MM_j$ with $\overline{A_j}$, $\MM_j \overline{A_j}$ with $(\pi_j)$ and $D/\MM_j^{e_j}$ with $\overline{A_j}/(\pi_j^{e_j})$
and thus we may view $S_j$ as a subring of $D/\MM_j^{e_j}$.  Let $R_j$ be the composite of $D$ and $S_j$ over $\MM_j^{e_j}$ and
$R = \bigcap_{j=1}^M R_j$.  By Corollary \ref{02.7}, $R$ is a Cohen-Kaplansky domain with $M$ maximal ideals $\{\mm_j = \MM_j \cap R\}_{j=1}^M$ and we have $R_{\mm_j} = (R_j)_{\MM_j \cap R_j}$ and the completion of $R_{\mm_j}$ is $A_j$.  Thus $R$ is a primefree
CK$(\sum_{j=1}^M n_j,M;0)$-domain.
\\ \forceindent
Now suppose that $q \ne 0$, and for each $1 \leq j \leq M$ we are given a primefree CK$(n_j,1;q)$-domain $A_j$.  Then $\overline{A_j}$ is a complete DVR with finite residue field which is an $\F_q$-algebra, so its
fraction field $L_j$ is a finite extension of $\F_q((t))$.  We now argue as above except taking $L$ to be a finite extension of $\F_q(t)$ and $D$ to be the subring of $L$ consisting of functions regular at the places $v_1,\ldots,v_M$.

b) Let $q_1 = p_1^{a_1}, \ldots, q_m = p_m^{a_m}$ be prime powers and $d_1,\ldots,d_m \geq 2$.  For $1 \leq j \leq m$ let $K_j$
be the unramified extension of $\Q_{p_j}$ of degree $a_j d_j$, let $\overline{A_j}$ be its valuation ring, with maximal ideal $(\pi_j)$, let
$r_j\colon \overline{A_j}/(\pi_j) \stackrel{\sim}{\ra} \F_{q^{d_j}}$, and let $A_j = r_j^{-1}(\F_q)$.  By Theorem \ref{02.8} $A_j$ is a (necessarily primefree) CK$(\frac{q_j^{d_j}-1}{q_j-1},1;0)$-domain.  Applying part a), we get that there is a primefree
CK$(\sum_{j=1}^m \frac{q_j^{d_j}-1}{q_j-1},1;0)$-domain.
\\ \forceindent
Now suppose there is a prime power $q$ such that each $q_j$ is a power of $q = p^a$.  Then $p_1 = \ldots = p_m = p$ and $a_1,\ldots,a_m$ are
all divisible by $a$.  We run through the argument as above but with $K_j = \F_{q^{a_jd_j/a}}((t))$ for all $j$.

\subsection{Recalled Results and Conjectures in Additive Number Theory}

\begin{thm}[{Sylvester \cite{Sylvester84}}]
\label{SYLVESTERTHM}
Let $x,y$ be coprime positive integers.
Then:
\begin{enumerate}[noitemsep,topsep=0pt,label={\alph*}{\rm)},font=\rm]
\item The equation $ax + by = xy-x-y$ has no solution in non-negative integers $a,b$.
\item For all $N \geq xy-x-y + 1 = (x-1)(y-1)$, there are non-negative integers $a,b$ such that $ax + by = N$.
\end{enumerate}
\end{thm}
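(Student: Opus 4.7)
The plan is to prove both parts via a direct argument using the coprimality of $x$ and $y$; this is the standard solution to the two-variable Frobenius (coin) problem, and it is self-contained.

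For part (a), I would argue by contradiction. Suppose $ax + by = xy - x - y$ with $a, b \geq 0$. Rewriting gives $(a+1)x + (b+1)y = xy$. Since $\gcd(x,y) = 1$ and $y$ divides the left-hand side minus $(b+1)y$, we deduce $y \mid (a+1)x$ and hence $y \mid a+1$; symmetrically $x \mid b+1$. Because $a+1, b+1 \geq 1$, this forces $a+1 \geq y$ and $b+1 \geq x$, so
\[ (a+1)x + (b+1)y \geq xy + xy = 2xy, \]
contradicting $(a+1)x + (b+1)y = xy$. Thus no such $a, b$ exist.

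For part (b), fix $N \geq (x-1)(y-1)$. Since $\gcd(x,y) = 1$, there exist integers $a_0, b_0$ with $a_0 x + b_0 y = N$, and all integer solutions take the form $(a_0 + ty, b_0 - tx)$ for $t \in \Z$. I would choose the unique $t$ such that $a := a_0 + ty$ lies in $\{0, 1, \ldots, y-1\}$, and set $b := b_0 - tx$. Then $a \geq 0$ automatically, so it remains to verify $b \geq 0$. From $ax \leq (y-1)x$ we get
\[ by = N - ax \geq (x-1)(y-1) - (y-1)x = -(y-1), \]
so $b \geq -(y-1)/y > -1$, and since $b$ is an integer, $b \geq 0$. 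This writes $N = ax + by$ with $a, b$ non-negative integers.

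There is essentially no obstacle here; the only delicate point is the bookkeeping in part (b) when choosing the correct representative from the family of integer solutions, and making sure the lower bound on $N$ is sharp enough to force the second coordinate to be non-negative after the first has been reduced modulo $y$. Both parts together recover the classical fact that the Frobenius number of two coprime integers $x$ and $y$ equals $xy - x - y$.
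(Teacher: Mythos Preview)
Your proof is correct and is the standard elementary argument for the two-variable Frobenius problem. Note, however, that the paper does not actually prove this theorem: it is stated in \S3.4 as a recalled classical result with a citation to Sylvester \cite{Sylvester84}, and no proof is given. So there is no ``paper's own proof'' to compare against; your argument simply supplies the well-known justification that the authors chose to omit.
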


\begin{remark}
\label{SYLREMARK}
Let $x,y \in \Z^+$ with $\gcd(x,y) = d$.  Theorem \ref{SYLVESTERTHM} implies: if $N \geq
\frac{(a-d)(b-d)}{d}$ and $d \mid N$, then there are non-negative integers $a,b$ such that $ax + by = N$.
\end{remark}

\begin{thm}[{Helfgott \cite{Helfgott15}}]
\label{HELFTHM}
Every odd $n \geq 7$ is a sum of three primes.
\end{thm}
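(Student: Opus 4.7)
The plan is to attack this via the Hardy--Littlewood--Vinogradov circle method. For odd $N$, set $S(\alpha) = \sum_{p \leq N} (\log p) \exp(2\pi i \alpha p)$ and study the integral
\[ I(N) = \int_0^1 S(\alpha)^3 \exp(-2\pi i \alpha N)\, d\alpha, \]
which counts representations of $N$ as an ordered sum of three primes, weighted by $\log p_1 \log p_2 \log p_3$. Showing $I(N) > 0$ for odd $N \geq 7$ gives the theorem. Partition $[0,1]$ into \emph{major arcs} (small neighborhoods of rationals $a/q$ with $q$ bounded by a suitable power of $\log N$) and the complementary \emph{minor arcs}. On the major arcs, one applies Dirichlet's theorem on primes in arithmetic progressions -- in its effective Siegel--Walfisz form -- to replace $S(\alpha)$ by an explicit smooth model and extract a main term of size $\tfrac{1}{2}\mathfrak{S}(N)N^2$, where the singular series $\mathfrak{S}(N) = \prod_p (1 + \text{local factor})$ is bounded below by a positive constant whenever $N$ is odd.

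The heart of the argument is the minor arc estimate: one must show $\sup_{\alpha \in \mathfrak{m}} |S(\alpha)| = o(N / \log N)$, after which H\"older's inequality and Parseval combine to dominate the minor arc integral by the major arc main term. Vinogradov's classical bilinear treatment via Vaughan's identity accomplishes this, in principle, for all sufficiently large $N$. This is the step I expect to be the main obstacle, because classical effectivization yields thresholds far beyond any feasible computer search, and turning the asymptotic statement into one that holds for \emph{every} odd $N \geq 7$ is what makes the full result genuinely difficult.

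This is exactly the obstacle Helfgott overcame. His contribution, which I would simply cite rather than attempt to reproduce, is to sharpen all of the above quantitatively: optimizing the Vaughan decomposition and bilinear sum bounds on the minor arcs; using numerically verified zero-free regions (and in fact a verified GRH up to large height) for Dirichlet $L$-functions of small conductor on the major arcs; and thereby reducing the threshold to a range (roughly $N \leq 10^{27}$) that can be closed off by a direct computer-assisted check exploiting the known abundance of primes in short intervals. Rather than redo any of this, the plan in our paper is to take Theorem \ref{HELFTHM} as a black box and feed it, together with Theorem \ref{SYLVESTERTHM} and the globalization machinery of Theorem \ref{SAURABHTHM}, into the arithmetic arguments of the next subsection.
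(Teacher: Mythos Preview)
Your proposal is correct and matches the paper's treatment: the paper states Theorem \ref{HELFTHM} as a recalled result attributed to Helfgott, with no proof given, and uses it as a black box in the subsequent arguments. Your sketch of the circle method background is accurate but unnecessary here, since the paper (like you, ultimately) simply cites \cite{Helfgott15}.
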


\begin{conj}[Goldbach]
\label{GOLDBACHCONJ}
Every even $n \geq 4$ is a sum of two primes.
\end{conj}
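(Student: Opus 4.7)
The final statement is Goldbach's Conjecture itself, a famous open problem since 1742, so a genuine proof proposal is not within reach; I can only describe the approach that any serious attempt would take. The natural framework is the Hardy-Littlewood circle method: with $S(\alpha) = \sum_{p \leq n} \log p \cdot e^{2\pi i p \alpha}$, the weighted count of representations $n = p_1 + p_2$ is
\[
r(n) = \int_0^1 S(\alpha)^2 e^{-2\pi i n \alpha} \, d\alpha.
\]
One partitions the unit interval into \emph{major arcs} (small neighborhoods of rationals $a/q$ with $q$ bounded by a power of $\log n$) and the complementary \emph{minor arcs}. On the major arcs, the Siegel-Walfisz theorem gives a main term of size $\asymp \mathfrak{S}(n) \cdot n / (\log n)^2$, where the singular series $\mathfrak{S}(n)$ is positive for every even $n \geq 4$, so producing at least one representation reduces to showing that the minor-arc contribution is smaller in magnitude.

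First I would try the standard combination of Vinogradov's bilinear identity with the Bombieri-Vinogradov theorem, which yields the required savings on the minor arcs \emph{on average} over $n$ — this is essentially the density-$1$ theorem of Chudakov, Estermann, and van der Corput cited in the introduction, and it is strong enough to deduce part (a) of Theorem \ref{THM1.7} via Theorem \ref{SAURABHTHM}. To convert this into a pointwise statement for every even $n$, I would next look for additional arithmetic input: either a dispersion-method bound on correlations $\sum_{k \leq n} \Lambda(k) \Lambda(n-k)$ uniform in $n$, or a sieve argument in the spirit of Chen Jingrun that produces a representation $n = p + P_2$ and then attempts to reduce the almost-prime factor to a prime.

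The main obstacle — and the reason Goldbach remains open — is that, unlike the ternary problem resolved by Helfgott (Theorem \ref{HELFTHM}), the binary problem offers only two copies of $S(\alpha)$ on the minor arcs. There is no third factor to absorb a Cauchy-Schwarz loss, and the required pointwise minor-arc estimate sits just beyond what is attainable even under the generalized Riemann hypothesis. Any plan would therefore need a genuinely new ingredient: a pointwise cancellation bound for $S(\alpha)^2$ on the minor arcs, or an equivalent handle on binary correlation sums of the von Mangoldt function — itself an open problem of comparable depth to Goldbach's.
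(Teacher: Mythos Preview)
Your assessment is correct: the statement is labeled as a \emph{Conjecture} in the paper, not a theorem, and the paper offers no proof of it whatsoever. Goldbach's Conjecture is used only conditionally, as an hypothesis in Theorem~\ref{THM1.7}b), so there is nothing to compare your proposal against. Your explanation of the circle-method framework and of the essential obstruction---that with only two copies of $S(\alpha)$ the minor-arc bound cannot be pushed through pointwise---is accurate and is precisely why the paper treats the binary statement as a conjecture while invoking Helfgott's resolved ternary result (Theorem~\ref{HELFTHM}) unconditionally.
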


\begin{thm}[{Chudakov \cite{Chudakov37, Chudakov38}, Estermann \cite{Estermann38}, van der Corput \cite{vanderCorput37}}]
\label{CEVDCTHM}
The set of even integers which are sums of two primes has relative density $1$ in the set of even positive integers (and thus density $\frac{1}{2}$).
\end{thm}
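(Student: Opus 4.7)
The proof uses the Hardy-Littlewood circle method adapted to the binary Goldbach problem. Fix a large parameter $N$ and introduce the exponential sum $S(\alpha) = \sum_{p \leq N}(\log p)\,e^{2\pi i \alpha p}$. The weighted representation count
\[
r(n) := \sum_{p_1+p_2 = n}(\log p_1)(\log p_2) = \int_0^1 S(\alpha)^2\, e^{-2\pi i n \alpha}\,d\alpha
\]
is positive exactly when $n$ is a sum of two primes, so it suffices to show $r(n)$ matches its predicted main term for all but $o(N)$ of the even $n \leq N$. The plan is the usual dissection of $[0,1]$ into major arcs $\mathfrak{M}$, the union of small neighborhoods of fractions $a/q$ with $q \leq Q := (\log N)^B$, and minor arcs $\mathfrak{m} = [0,1] \setminus \mathfrak{M}$.

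On $\mathfrak{M}$, the Siegel-Walfisz theorem supplies the approximation $S(a/q + \beta) = \frac{\mu(q)}{\varphi(q)} \sum_{m \leq N} e^{2\pi i \beta m} + O(N e^{-c\sqrt{\log N}})$ uniformly in $q \leq Q$. Substituting into the integral and collecting the Ramanujan sums produced by the sum over $a$ gives a major arc contribution of $\mathfrak{S}(n)\, n + O(N e^{-c\sqrt{\log N}})$, where the singular series
\[
\mathfrak{S}(n) = 2 \prod_{p > 2}\!\Bigl(1 - \tfrac{1}{(p-1)^2}\Bigr) \prod_{\substack{p \mid n \\ p > 2}} \tfrac{p-1}{p-2}
\]
is an absolutely convergent Euler product bounded below by a positive absolute constant whenever $n$ is even and $n \geq 4$.

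For the minor arc integral $r_{\mathfrak{m}}(n) := \int_{\mathfrak{m}} S(\alpha)^2 e^{-2\pi i n \alpha}\, d\alpha$, invoke a Vinogradov-type estimate: $\sup_{\alpha \in \mathfrak{m}} |S(\alpha)| \ll N (\log N)^{-A}$ for any fixed $A$, obtained via Vaughan's identity combined with the Dirichlet-approximation characterization of $\mathfrak{m}$. Coupled with Parseval's identity $\int_0^1 |S(\alpha)|^2\, d\alpha \ll N \log N$ (a consequence of the prime number theorem), Bessel's inequality for $r_{\mathfrak{m}}$ yields
\[
\sum_{n \leq N} |r_{\mathfrak{m}}(n)|^2 \;\leq\; \int_0^1 |S(\alpha)|^4\, d\alpha \;\ll\; N^3 (\log N)^{1 - 2A}.
\]
Hence the number of even $n \leq N$ with $|r_{\mathfrak{m}}(n)| > \tfrac{1}{2} \mathfrak{S}(n)\, n$ is $\ll N(\log N)^{-1} = o(N)$. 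Every other even $n \leq N$ satisfies $r(n) > 0$ and is therefore a sum of two primes, establishing relative density one inside the evens.

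The principal obstacle is the minor arc bound. The binary structure of Goldbach precludes pointwise control of $r_{\mathfrak{m}}(n)$ — in contrast with the ternary problem one cannot close the argument directly and must route through the $L^2$ (exceptional-set) formulation above. Any fixed power-of-log saving over the trivial bound $|S(\alpha)| \leq N$ suffices, and such a saving on $\mathfrak{m}$ is standard via Vaughan's identity (or Vinogradov's original bilinear decomposition of $\Lambda$). The remaining ingredients — convergence and positivity of $\mathfrak{S}(n)$ for even $n$, the Siegel-Walfisz input, and the assembly of the major and minor arc contributions — are then routine.
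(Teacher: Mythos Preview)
The paper does not supply a proof of this theorem; it is stated as a classical result with references to the original 1937--38 papers of Chudakov, Estermann, and van der Corput, and is simply invoked as a black box in \S3.7. So there is nothing in the paper to compare your argument against.

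That said, your sketch is a faithful outline of the standard circle-method proof of the density-one Goldbach theorem. The essential architecture --- major arcs via Siegel--Walfisz giving $\mathfrak{S}(n)n$, minor arcs controlled in $L^2$ by combining $\sup_{\mathfrak m}|S(\alpha)| \ll N(\log N)^{-A}$ with Parseval, and a Chebyshev-type exceptional-set count --- is exactly right, and you correctly identify why the binary problem forces the $L^2$ route rather than a pointwise bound. Two small quibbles: the use of Vaughan's identity is anachronistic for a 1937--38 result (you acknowledge Vinogradov's decomposition as the alternative, which is what was actually used), and the passage from $\sum_n |r_{\mathfrak m}(n)|^2 \ll N^3(\log N)^{1-2A}$ to an exceptional-set bound needs you to restrict attention to $n$ of size $\asymp N$ (e.g., $n \in (N/2,N]$) so that the threshold $\tfrac12\mathfrak{S}(n)n$ is genuinely $\gg N$; a dyadic decomposition then handles all $n$. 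Neither point affects the validity of the approach.
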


\begin{conj}[Schinzel]
\label{SCHINZELCONJ} Let $f(x) = x^2+bx+c$, where $b$ and $c$ are integers of opposite parity and $3\nmid b$. There is a constant $N_0 = N_0(f)$ such that for all odd integers $n > N_0$ not belonging to $f(\Z)$, there are primes $p_1$ and $p_2$ with $n = f(p_1) + p_2$.
\end{conj}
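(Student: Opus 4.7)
The statement is Schinzel's quadratic analogue of the Goldbach Conjecture, so the natural line of attack is the Hardy--Littlewood circle method. I would introduce the weighted counting function
\[ r(n) := \sum_{f(p_1) + p_2 = n} (\log p_1)(\log p_2), \]
where the sum is over pairs of primes, and the generating sums
\[ S_f(\alpha) = \sum_{p \leq \sqrt{n}} (\log p)\, e(\alpha f(p)), \qquad S(\alpha) = \sum_{p \leq n} (\log p)\, e(\alpha p), \]
so that $r(n) = \int_0^1 S_f(\alpha) S(\alpha) e(-n\alpha)\, d\alpha$. The goal is to show $r(n) \gg \sqrt{n}/\log^2 n$ for all sufficiently large odd $n$ not in $f(\Z)$, which will force at least one prime representation.

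The plan is the usual dissection of $[0,1]$ into major arcs $\mathfrak{M}$ (narrow neighbourhoods of reduced fractions $a/q$ with $q$ below some cutoff) and complementary minor arcs $\mathfrak{m}$. On $\mathfrak{M}$ one applies the Siegel--Walfisz theorem to both exponential sums to extract the expected asymptotic $\mathfrak{S}(n)\,\mathcal{J}(n)$, where $\mathcal{J}(n) \asymp \sqrt{n}/\log^2 n$ is an archimedean factor and the singular series $\mathfrak{S}(n) = \prod_p \sigma_p(n)$ encodes the density of solutions to $f(x_1) + x_2 \equiv n \pmod{p^k}$ with $x_i$ coprime to $p$. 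A direct local check shows that the hypothesis that $b$ and $c$ have opposite parity is exactly what makes $f(p_1) + p_2 \equiv n \pmod 2$ solvable with both primes odd (else $f(p_1)$ is always odd for odd $p_1$, forcing $p_2 = 2$); similarly, $3 \nmid b$ ensures that $f$ takes all residues mod $3$ on the primes, so $\sigma_3(n) > 0$. For $p \geq 5$ a Hensel-type count gives $\sigma_p(n) > 0$ provided $n \notin f(\Z) \pmod{p^k}$, and standard Euler-product estimates yield $\mathfrak{S}(n) \gg 1$ uniformly over the permitted $n$.

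The real obstacle is the minor-arc estimate: one must prove a nontrivial bound on $\sup_{\alpha \in \mathfrak{m}} |S_f(\alpha)|$. After completing the square inside $f$ and peeling off the resulting linear shift via Vinogradov's bound on $\sum_p e(\alpha p)$, this reduces to showing genuine cancellation in the prime quadratic exponential sum $\sum_{p \leq x} e(\alpha p^2)$. Vaughan's identity combined with Weyl differencing delivers a saving of the shape $x^{1-\delta}$ for $\alpha$ of generic Diophantine quality, but the known unconditional savings are just barely insufficient to dominate the expected main term $\sqrt{n}/\log^2 n$ across the entire minor-arc range; this is precisely the gap that keeps the statement conjectural. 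A proof along these lines therefore seems to require either GRH-level input on Dirichlet $L$-functions (to improve the Siegel--Walfisz range and sharpen the prime quadratic sum) or a genuinely new idea for $\sum_p e(\alpha p^2)$; the implicit dependence of $N_0$ on $f$ would emerge as the threshold past which such a sharpened minor-arc bound finally dominates the singular series times the archimedean factor.
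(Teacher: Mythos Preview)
The statement is labeled a \emph{conjecture} in the paper, and the paper offers no proof of it. What follows the statement is only a remark recording that this is a special case of a general conjecture of Schinzel, and that the hypotheses on $b$, $c$, and $n$ are exactly those ensuring that $n - f(x)$ is irreducible over $\Z$ and that $x(n-f(x))$ has no fixed prime divisor, so that Schinzel's asymptotic prediction (eq.~(3) of \cite{Schinzel63}) applies and forecasts infinitely many representations.

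Your proposal is therefore not to be measured against a proof but against that heuristic remark, and you have taken a different, more explicit route. Rather than appealing to Schinzel's Hypothesis~H--style framework, you sketch the circle-method computation directly and read the parity and mod-$3$ hypotheses as positivity of the local factors $\sigma_2(n)$ and $\sigma_3(n)$ in the singular series. This is consistent with the paper's remark (the no-fixed-divisor condition for $x(n-f(x))$ is equivalent to all local densities being positive), though the paper's formulation via irreducibility of $n-f(x)$ is what actually accounts for the exclusion $n \notin f(\Z)$; in your sketch that exclusion is tied somewhat loosely to ``$n \notin f(\Z) \pmod{p^k}$'' for $p \ge 5$, which is not quite the right mechanism, since $n \notin f(\Z)$ is a global irreducibility condition rather than a local one.

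Most importantly, you correctly and honestly identify the real obstruction: the available minor-arc bounds for $\sum_{p \le x} e(\alpha p^2)$ do not beat the expected main term of size $\sqrt{n}/\log^2 n$, and you conclude that the argument cannot be closed unconditionally. So your write-up is not a proof and does not pretend to be one; as a heuristic justification it is sound and in fact more detailed than what the paper itself supplies.
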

\begin{remark} Conjecture \ref{SCHINZELCONJ} is a special case of a conjecture of Schinzel \cite{Schinzel63} generalizing the Goldbach conjecture. The conditions on $b, c$, and $n$ guarantee that the polynomial $n-f(x)$ is irreducible over $\Z$ and that $x(n-f(x))$ has no fixed divisor. Now the asymptotic prediction appearing as eq. (3) in \cite{Schinzel63} implies that the number of representations of $n$ in the form $f(p_1)+p_2$ tends to infinity as $n\to\infty$.
\end{remark}

\begin{thm}[van der Corput] \label{VDCTHM} If $f(x)$ satisfies the hypotheses of Conjecture \ref{SCHINZELCONJ}, then the set of odd $n$ representable in the form $f(p_1)+p_2$, with $p_1$ and $p_2$ prime, has relative density $1$ in the set of odd positive integers.
\end{thm}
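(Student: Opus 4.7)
The plan is to establish the result by the Hardy--Littlewood circle method, in the same spirit as the Chudakov--Estermann--van der Corput proof of Theorem \ref{CEVDCTHM}. Given a threshold $N$ and $X=\sqrt{N}$, define the weighted representation count
\[
r(n) = \sum_{\substack{p_1, p_2 \text{ prime}\\ f(p_1)+p_2 = n}} (\log p_1)(\log p_2),
\]
so that for every $n \le N$,
\[
r(n) = \int_0^1 S(\alpha)\, T(\alpha)\, e^{-2\pi i n \alpha}\, d\alpha,
\]
where $S(\alpha) = \sum_{p \le X} (\log p)\, e^{2\pi i \alpha f(p)}$ and $T(\alpha) = \sum_{p \le N} (\log p)\, e^{2\pi i \alpha p}$. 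It suffices to show $r(n)>0$ for all but $o(N)$ odd $n \le N$.

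First I would dissect $[0,1]$ in the standard way into major arcs $\mathfrak{M}$ (small neighborhoods of reduced fractions $a/q$ with $q \le (\log N)^A$) and minor arcs $\mathfrak{m}$. On $\mathfrak{M}$ the Siegel--Walfisz theorem lets one replace $S$ and $T$ by their expected densities, producing the major-arc contribution
\[
r_{\mathfrak{M}}(n) = \mathfrak{S}(n)\cdot J(n) + O\!\left( \frac{\sqrt{N}}{(\log N)^3} \right),
\]
where $J(n) \asymp \sqrt{n}/(\log n)^2$ is archimedean and $\mathfrak{S}(n) = \prod_p \sigma_p(n)$ is a singular series. The hypotheses on $f$ are precisely what is needed to force $\mathfrak{S}(n) \gg 1$ for every odd $n$: the opposite-parity condition on $b,c$ gives $\sigma_2(n) > 0$; the condition $3 \nmid b$ prevents $n - f(x)$ from being identically $0\bmod 3$, so $\sigma_3(n) > 0$; and for $p \ge 5$ the local density is positive simply because $f$ is nonconstant modulo $p$.

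The key minor-arc input is a power-saving Weyl-type bound $\sup_{\alpha \in \mathfrak{m}} |S(\alpha)| \ll X^{1-\delta}$ for some $\delta>0$, obtained by Vaughan's identity combined with Weyl differencing applied to the quadratic exponential sum over primes. Combined with Parseval and the trivial mean-square bound $\int_0^1 |T|^2\, d\alpha \ll N \log N$, this gives
\[
\sum_{n \le N} |r_{\mathfrak{m}}(n)|^2 \;\le\; \int_{\mathfrak{m}} |S|^2 |T|^2\, d\alpha \;\ll\; N^{2-\delta} \log N,
\]
so Chebyshev's inequality implies $|r_{\mathfrak{m}}(n)| < r_{\mathfrak{M}}(n)/2$ (hence $r(n)>0$) for all but $O(N^{1-\delta}(\log N)^5)=o(N)$ odd $n \le N$. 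The main obstacle is the minor-arc estimate for the quadratic exponential sum $S(\alpha)$ over primes, which goes beyond the linear Vinogradov bound appearing in the proof of Theorem \ref{CEVDCTHM}, but is accessible by now-standard bilinear-form techniques; the rest of the argument is a routine execution of the circle method.
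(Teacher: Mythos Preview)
The paper does not give a proof: it simply records that the statement is a special case of a general theorem announced by van der Corput in \cite{vanderCorput37} and proved in \cite{vanderCorput39}, pointing also to \cite[Satz~2a]{Schwarz61}. Your circle-method outline is essentially the argument those references carry out, and your identification of where the hypotheses on $b$ and $c$ enter---namely, in forcing the local factors $\sigma_2(n)$ and $\sigma_3(n)$ of the singular series to be positive for odd $n$---is correct.

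Two quantitative slips are worth flagging, though neither is fatal. First, with the log-weights you chose, the archimedean term is $J(n)\asymp\sqrt{n}$, not $\sqrt{n}/(\log n)^2$; the latter is the expected size of the \emph{unweighted} representation count. Second, the claimed power saving $\sup_{\alpha\in\mathfrak{m}}|S(\alpha)|\ll X^{1-\delta}$ is too optimistic. Weyl's inequality for a quadratic phase (over integers or, via Vaughan's identity, over primes) gives
\[
|S(\alpha)|\ll X^{1+\epsilon}\bigl(q^{-1}+X^{-1}+qX^{-2}\bigr)^{1/2}
\]
when $|\alpha-a/q|\le 1/q^2$; on your minor arcs one only knows $(\log N)^A<q\le N/(\log N)^A$, so the saving is a power of $\log N$, not a power of $X$. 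Fortunately this weaker bound is all you need: it yields $\sum_n|r_{\mathfrak m}(n)|^2\ll N^2/(\log N)^{A-1}$, and then Chebyshev's inequality produces at most $O\bigl(N/(\log N)^{A-1}\bigr)=o(N)$ exceptional odd $n\le N$.
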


\begin{proof} This is a special case of a more general theorem of van der Corput, announced in \cite{vanderCorput37} and proved in \cite{vanderCorput39}. See also \cite[Satz 2a]{Schwarz61}.
\end{proof}

\subsection{Proof of Theorem \ref{LITTLEMAIN}}
This follows from Theorem \ref{1.2} and Lemma \ref{CKLEMMA}d).

\subsection{Proof of Theorem \ref{MAINERTHM1}} We prove parts (a) and (b) simultaneously. Let $m,n \in \Z^+$.  We suppose: (i) $n \geq 3m$; (ii)  if $n$ is even then $m \geq 3$ and $n \geq 10$; (iii) if $n$ is odd then $m \geq 4$ and $n \geq 13$.

\textbf{Case 1:} Suppose $m \not \equiv n \pmod{2}$.  Then $n-3(m-3)$ is even and $n-3(m-3)\geq 9$, so in fact $n-3(m-3) \geq 10$.  By Theorem \ref{HELFTHM} there are primes $p_1,p_2,p_3$ such that
\[ n = (p_1 + 1) + (p_2 + 1) + (p_3 + 1) + \sum_{j=4}^m (2+1). \]

\textbf{Case 2:} Suppose $m \equiv n \pmod{2}$.  Then
$m \geq 4$.  Moreover $n -3(m-4) \geq 12$ and is even.  If $n-3(m-4) = 12$ then $n = 3m = \sum_{j=1}^m (2+1)$.  Otherwise $n-3(m-4) \geq 14$, so $n-3(m-4)-(3+1) \geq 10$ and is even, so by Theorem \ref{HELFTHM} there are primes $p_1,p_2,p_3$ such that
\[ n = (p_1 + 1) + (p_2 + 1) + (p_3 + 1) + (3+1) +
\sum_{j=5}^m (2+1). \]
In all cases Theorem \ref{SAURABHTHM} applies to show there is a primefree CK$(n,m;0)$-domain.

(c) By part (a), we may assume that $n$ is odd.  We appeal to a generalization due to Schwarz of Vinogradov's ``three primes theorem'' \cite[Hauptsatz, p. 25]{Schwarz60}. Schwarz's result implies that all sufficiently large odd $n$ are representable in the form $(p_1^2+p_1+1) + (p_2+1) + (p_3+1)$.  Apply Theorem \ref{SAURABHTHM}.


\subsection{Proof of Theorem \ref{THM1.7}}

a) If $n+2 = (p_1+1)+(p_2+1)$ for some primes $p_1$ and $p_2$, Theorem \ref{SAURABHTHM} applies to show there is a primefree CK$(n,2;0)$ domain. According to Theorem \ref{CEVDCTHM}, the set of even $n$ for which $n+2$ is so expressible has relatively density $1$ in the set of even positive integers. Similarly, if $n=(p_1^2 + p_1 + 1) + (p_2+1)$ for some primes $p_1$ and $p_2$, Theorem \ref{SAURABHTHM}a) applies to show there is a primefree CK$(n,2;0)$-domain. Theorem \ref{VDCTHM}, with $f(x)=x^2+x+2$, implies that the set of odd $n$ so representable has relative density $1$ in the set of odd numbers.

b) We argue as in part a) but apply Conjecture \ref{GOLDBACHCONJ} instead of Theorem \ref{CEVDCTHM}.

c) When $f(x)=x^2+x+2$, the condition $n \notin f(\Z)$ is satisfied by all odd integers $n$.  Now we argue as in part a) but apply Conjecture \ref{SCHINZELCONJ} instead of Theorem \ref{VDCTHM}.

\subsection{Proof of Theorem \ref{MAINERPAUL}}
Let $n$ be a prime number such there is a primefree CK$(n,1)$-domain.  By Theorem \ref{1.4}b), $n$ is of the form $\frac{q^d-1}{q-1}$.  By a result of Bateman-Stemmler \cite{Bateman-Stemmler62}, the number of primes $n \leq x$ of this form is at most $\frac{50 \sqrt{x}}{\log^2 x}$, for all sufficiently large $x$.  The result now follows from the Prime Number Theorem:
$\pi(x) = \# \{ \text{primes } p \leq x\} \sim \frac{x}{\log x}$.

\subsection{Proof of Theorem \ref{MAINQ}}

a) Let $R$ be a primefree atomic domain which is an $\F_q$-algebra.  We may assume $R$ is Cohen-Kaplansky, for otherwise it has infinitely many atoms.  Let $\mm$ be a maximal ideal of $R$.  By Theorem \ref{1.2}, $R$ has at least as many atoms as $R_{\mm}$, so we may assume that $(R,\mm)$ is local.  Then $R/\mm \cong \F_{q^a}$ and by Lemma \ref{1.3}c) $R$ has at least $\# \PP^{d-1}(\F_{q^a}) \geq \# \PP^1(\F_q) = q+1$ atoms.

b) Let $n \geq 8$.  Taking $a = 2+1$, $b = 4+1$ in Theorem \ref{SYLVESTERTHM}, we get that there are $x,y \in \N$ such that $x(2+1) + y(4+1) = n$.   By Theorem \ref{SAURABHTHM} there is a primefree CK$(n,x+y)$-domain of characteristic $2$.  The equation $x(2+1) + y(4+1) = n$ also has a solution in non-negative integers $x,y$ for $n \in \{3,5,6\}$, so there is a primefree $\CKn$-domain of characteristic $2$ for these values as well.  For $n \in \{4,7\}$ there is a primefree CK$(n,1)$-domain of characteristic $2$ by Theorem \ref{1.4}c).

c) By Theorem \ref{1.4} there are a primefree \CK$(n,1;q)$-domains for $n \in \{q+1,2q\}$, so by Theorem \ref{SAURABHTHM} for all $a,b \in \N$ there is a primefree \CK$(a(q+1)+b(2q),a+b;q)$-domain.  Because $q$ is a power of $2$, $q+1$ and $2q$ are coprime, so by
Theorem \ref{SYLVESTERTHM} every $n \geq (q+1-1)(2q-1) = 2q^2-q$ is of the form $a(q+1)+b(2q)$ for $a,b \in \N$.

d) Theorem \ref{1.4} gives primefree \CK$(n,1;q)$-domains for $n \in \{q+1,2q,q^2+q+1\}$, so by Theorem \ref{SAURABHTHM} for all
$a,b,c \in \N$ there is a primefree \CK$(a(q+1) + b(2q) + c(q^2+q+1),a+b+c;q)$-domain.  Since $q$ is odd we have $\gcd(q+1,2q) = 2$, so by Remark \ref{SYLREMARK} every even $n \geq (q-1)^2$ is of the form $a(q+1) + b(2q)$ for $a,b \in \N$.  If $n \geq 2q^2-q+1$ is odd, then $n \geq 2q^2-q+2$ and $n-(q^2+q+1) \geq (q-1)^2$ is even.

\subsection{Concerning Conjecture \ref{CONJ1}}
By Theorem \ref{LITTLEMAIN} if there is a \CK$(n,2)$-domain then $n \geq 6$ and if there is a \CK$(n,3)$-domain then $n \geq 9$.
\\ \forceindent
Using \texttt{PARI}/\texttt{GP}, we computed that there are 168 odd integers below $10^{10}$ \textbf{not} expressible in the form $(p_1^2+p_1+1)+(p_2+1)$, the largest being $1446379$. Using \texttt{Mathematica}, we checked that the 165 exceptional integers lying in $[7,10^{10}]$ nevertheless can be represented as a sum of two integers of the form $\frac{q^d-1}{q-1}$. Combined with Conjecture \ref{GOLDBACHCONJ}, we view this as evidence that all integers $n\ge 6$ admit a representation in that form. This would imply that there is a primefree \CK$(n,2;0)$-domain for all $n\ge 6$. If so, then as above for all $n \geq 6$ there is a primefree \CK$(n+3,3;0)$-domain, so the first part of Conjecture \ref{CONJ1} implies the second.
\\ \forceindent
Combining our computations with the verification of the Goldbach conjecture to $4\cdot 10^{18}$ \cite{OHP14}, it follows that both parts of Conjecture 1.8 hold for $n \le 10^{10}$.

\section{Final Comments}



\subsection{The connection with orders}
Coykendall-Spicer construct the domains used to prove Corollary \ref{MAINCSCOR} using local orders: for $S$ a finite nonempty set of primes, let $\Z^S$ be the localization of $\Z$ with respect to $\Z \setminus \bigcup_{p \in S} (p)$.  Let $K$ be any number field such that for all $p \in S$, there is a unique prime of $\Z_K$ lying over $p$.  Then the normalization $\overline{R}$ of $\Z^S$ in $K$ is a semilocal PID with $m$ maximal ideals.  Let $R$ be any $\Z^S$-order in $K$: i.e., a finitely generated $\Z^S$-subalgebra of $K$ with fraction field $K$.  Then $R$ is a Cohen-Kaplansky domain (cf. \cite[$\S$2]{Coykendall-Spicer12}).
\\ \forceindent
 One can replace $\Z^S$ with any semilocal PID $A$ with finite residue fields.  In the early stages of our work we took $A$ to be a semilocalization of the ring of integers of a number field $F$, $K/F$ a quadratic extension and worked with
relative quadratic orders $R$ in the normalization $\overline{R}$ of $A$ in $K$.  Then, using Theorem \ref{03.2} the residue fields of $A$ may be taken to be any multiset of finite fields, so the condition \emph{distinct primes}
may be replaced with \emph{arbitrary prime powers}.  When $R$ has squarefree conductor ideal $(R:\overline{R})$ then each
localization $R_{\mm}$ of $R$ is $\mm^2$-universal in the sense of Theorem \ref{02.6}b), and this is easy to prove using
the criterion $\mm = \overline{\mm} = (R_{\mm}:\overline{R_{\mm}})$.  This gives an alternate proof of Theorem \ref{SAURABHTHM}b) in the case $d_1 = \ldots = d_m = 2$.
\\ \forceindent
The proof of Theorem \ref{SAURABHTHM} shows that for every primefree \CK$(n;0)$-domain $R$ there is an order $\OO$ in a number field $K$ and a finite set $S$ of maximal
ideals of $\OO$ such that the group of divisibility of $R$ is isomorphic to the group of divisibility of $\OO^S =
(\OO \setminus \bigcup_{\mm \in S} \mm))^{-1} \OO$.  Here we have a \emph{localized order} rather than a $\Z^S$-order.
 But as in the proof of Theorem \ref{SAURABHTHM}, one sees that every group of divisibility of a local Cohen-Kaplansky domain in characteristic zero
arises as a $\Z_{(p)}$-order in a number field, and that in positive characterstic the same holds with $\Z_{(p)}$ replaced by the localization of $\F_p[t]$ at an irreducible polynomial.
It would be interesting
to compute the number of irreducibles in various local orders.

\subsection{A better Globalization Theorem?}
The main algebraic problem considered in this paper is the following:

\begin{ques}
\label{QUES3}
Let $R_1,\ldots,R_m$ be primefree Cohen-Kaplansky domains with $n_1,\ldots,n_m$ atoms.  Is there a primefree Cohen-Kaplansky
domain with $\sum_{j=1}^m n_j$ atoms?
\end{ques}
\noindent
Theorem \ref{SAURABHTHM} answers this question in the affirmative when $R_1,\ldots,R_m$ all
have the same characteristic.  Perhaps this is the only ``natural'' case: in what reasonable sense can domains of different
characteristics be combined?  But from the perspective of Question \ref{QUES2} it would be nice if one could combine
local building blocks of different characteristics.  Could it be that whenever there is a primefree CK$(n,1;q)$-domain
for $q > 0$ there is also a primefree CK$(n,1;0)$-domain?
\\ \forceindent
We end by discussing a possible strategy for showing this.  Every complete CK$(n,1)$-domain $R$ of positive characteristic is the composite of $\F_q[[t]]$ and a subring $S$ of
$\F_q[[t]]/(t^e)$ over $(t^e)$.  Every ring $\F_q[[t]]/(t^e)$ has an isomorphic copy $\OO_K/\MM^e$, where $K$ is a $p$-adic field with valuation ring $\OO_K$ and maximal ideal $\MM$ \cite{Necaev71}.  Thus we can build the ``corresponding characteristic $0$ CK-domain'' $\tilde{R}$: the composite of $\OO_K$ and the isomorphic copy of $S$ over $\MM^e$.  Does $\tilde{R}$ have the same number of irreducibles as $R$?
The group of divisibility of $R$ is isomorphic to $\Z \oplus \frac{ (\overline{R}/(R:\overline{R}))^{\times}}{(R/(R:\overline{R}))^{\times}            }$
\cite[Thm. 4.4]{Anderson-Mott92}.  But this is an isomorphism of abstract groups, whereas to count atoms we need the isomorphism to respect the partial orderings.  (The atoms are the minimal nonzero elements of the positive cone.)  Do these composites have isomorphic order structure?  If so, then Question \ref{QUES3} has an affirmative answer.

\end{document}